\renewcommand {\a}{ \alpha }
\renewcommand{\b}{\beta}
\newcommand{\e}{\epsilon}
\newcommand{\g}{\gamma}
\newcommand{\vark}{\varkappa}
\renewcommand{\d}{\delta}
\newcommand{\s}{\sigma}
\renewcommand{\l}{\lambda}
\newcommand{\z}{\zeta}
\renewcommand{\t}{\theta}
\newcommand{\T}{\Theta}
\newcommand{\p}{\partial}
\newcommand{\R}{ \mathbb R}
\newcommand {\BA}{\mathbf A}
\newcommand {\BB}{\mathbf B}
\newcommand {\BE}{\mathbf E}
\newcommand {\BK}{\mathbf K}
\newcommand {\BS}{\mathbf S}
\newcommand {\BR}{\mathbf R}
\newcommand {\BT}{\mathbf T}
\newcommand {\bx}{\mathbf x}
\newcommand {\by}{\mathbf y}
\newcommand{\SM}{{\sf{M}}}
\newcommand {\bxi}{\boldsymbol\xi}
\newcommand{\lu}{\langle}
\newcommand{\ru}{\rangle}
\newcommand{\plainC}[1]{\textup{{\textsf{C}}}^{#1}}
\newcommand{\plainH}[1]{\textup{{\textsf{H}}}^{#1}}
\newcommand{\plainL}[1]{\textup{{\textsf{L}}}^{#1}}
\DeclareMathOperator {\re} {{Re}}
\DeclareMathOperator{\op}{{Op}}
\newtheorem{thm}{Theorem}
\newtheorem{cor}[thm]{Corollary}
\newtheorem{lem}[thm]{Lemma}
\newtheorem{prop}[thm]{Proposition}
\theoremstyle{definition}
\newtheorem{con}[thm]{Conjecture}
\begin{document}
\hoffset -4pc

\title
[A family of anisotropic integral operators]
{{A family of anisotropic integral operators and behaviour of its maximal
eigenvalue}}
\author[B.S. Mityagin, A.V. Sobolev]{B.S. Mityagin, A.V. Sobolev}
\address{Department of Mathematics\\ The Ohio State University\\
231 West 18th Ave\\ Columbus\\ OH 43210 USA}
\email{mityagin.1@osu.edu}
 \address{Department of Mathematics\\ University College London\\
Gower Street\\ London\\ WC1E 6BT UK}
\email{asobolev@math.ucl.ac.uk}
\keywords{ Eigenvalues, asymptotics, positivity improving
integral operators, pseudo-differential operators, superconductivity.}
\subjclass[2010]{Primary 45C05; Secondary 47A75}

\begin{abstract}
We study the family of compact integral operators $\BK_\b$ in $\plainL2(\R)$
with the kernel
\begin{equation*}
K_\b(x, y) = \frac{1}{\pi}\frac{1}{1 + (x-y)^2 + \b^2\Theta(x, y)},
\end{equation*}
depending on the parameter $\b >0$, where $\T(x, y)$ is a symmetric non-negative
homogeneous function of degree $\g\ge 1$.
The main result
%
is the following asymptotic formula for the maximal eigenvalue $\SM_\b$
of $\BK_\b$:
\begin{equation*}
\SM_\b = 1 - \l_1 \b^{\frac{2}{\g+1}} + o(\b^{\frac{2}{\g+1}}), \b\to 0,
\end{equation*}
where $\l_1$ is the lowest eigenvalue of the operator $\BA = |d/dx| + \frac{1}{2}\T(x, x)$.
A central role in the proof is played
by the fact that $\BK_\b, \b>0,$ is positivity improving.
The case $\T(x, y) = (x^2 + y^2)^2$ has been studied earlier in
the literature as a simplified model of high-temperature superconductivity.

\end{abstract}

\maketitle

\section{Introduction and the main result}

\subsection{Introduction}
The object of the study is the following family of integral operators on $\plainL2(\R)$:
\begin{equation}\label{bkb:eq}
\BK_\b u(x) = \int  K_\b(x, y) u(y) dy,
\end{equation}
(here and below we omit the domain of integration if it
is the entire real line $\R$) with the kernel
\begin{equation}\label{kernel:eq}
K_\b(x, y) = \frac{1}{\pi}\frac{1}{1 + (x-y)^2 + \b^2\Theta(x, y)},
\end{equation}
where $\b>0$ is a small parameter, and the function
$\Theta = \Theta(x, y)$ is a homogeneous
non-negative function of $x$ and $y$ such that
\begin{equation}\label{Theta:eq}
\Theta(t x, t y) = t^\g \Theta(x, y), \ \g >0,
\end{equation}
for all $x, y\in\R$ and $t >0$,
and the following conditions are satisfied:
\begin{equation}\label{Theta1:eq}
\begin{cases}
c\le \Theta(x, y)\le C, \ \  |x|^2+|y|^2 = 1,\\[0.2cm]
\Theta(x, y) = \Theta(y, x),  x, y\in\R.
\end{cases}
\end{equation}
By $C$ or $c$ (with or without indices) we denote various positive
constants whose value is of no importance.
The conditions \eqref{Theta:eq} and \eqref{Theta1:eq} guarantee that
the operator $\BK_\b$ is self-adjoint and compact.

Such an  operator, with $\Theta(x, y) = (x^2+ y^2)^2$ was suggested by P. Krotkov and A. Chubukov
in \cite{KC1} and \cite{KC2} as a simplified model of
high-temperature superconductivity. The analysis in \cite{KC1}, \cite{KC2} reduces
to the asymptotics of the top eigenvalue $\SM_\b$ of the operator $\BK_\b$ as
$\b\to 0$. Heuristics in \cite{KC1} and \cite{KC2} suggest that
$\SM_\b$ should behave as $1 - w \b^{\frac{2}{5}}+o(\b^{\frac{2}{5}})$ with
some positive constant $w$. A mathematically rigorous argument given by B. S. Mityagin
in \cite{M} produced a
two-sided bound supporting this formula. The aim of the present paper is
to find and justify an appropriate two-term asymptotic formula for
$\SM_\b$ as $\b\to 0$ for
a homogeneous function $\Theta$ satisfying \eqref{Theta:eq}, \eqref{Theta1:eq}
and some additional smoothness conditions (see \eqref{lip:eq}).

As $\b\to 0$, the operator
$\BK_\b$ converges strongly to the positive-definite operator
$\BK_0$, which is no longer compact. The norm of $\BK_0$ is easily
found using the Fourier transform
\begin{equation*}
\hat f(\xi) = \frac{1}{\sqrt{2\pi}} \int e^{-i\xi x} f(x) dx,
\end{equation*}
which is unitary on $\plainL2(\R)$.
Then one checks directly that
\begin{equation}\label{m:eq}
\textup{the Fourier transform of}\ \ \ m_t(x) = \frac{t}{\pi}\frac{1}{t^2+x^2},\  t >0,
\ \ \ \textup{equals}\ \ \ \hat m_t(\xi) = \frac{1}{\sqrt{2\pi}}e^{-t|\xi|},
\end{equation}
and hence the operator $\BK_0$ is unitarily equivalent to the multiplication by
the function $e^{-|\xi|}$, which means that
$\|\BK_0\| = 1$.

\subsection{The main result} For the maximal eigenvalue $\SM_\b$ of the operator
$\BK_\b$ denote by $\Psi_\b$ the corresponding normalized eigenfunction.
Note that the operator
$\BK_\b$ is positivity improving, i.e. for any non-negative non-zero function $u$ the function
$\BK_\b u$ is positive a.a. $x\in\R$ (see \cite{RS4}, Chapter XIII.12). Thus, by \cite{RS4},
Theorem XIII.43 (or by \cite{Davies}, Theorem 13.3.6),
the eigenvalue $\SM_\b$ is non-degenerate and the eigenfunction
$\Psi_\b$ can be assumed to be positive a.a. $x\in\R$. From now on we always choose
$\Psi_\b$ in this way.
The behaviour of $\SM_\b$ as $\b\to 0$, is governed by the model operator
\begin{equation}\label{modela:eq}
(\BA u)(x) = |D_x|u(x) + 2^{-1}\t(x) u(x),
\end{equation}
where
\begin{equation*}
\t(x) = \T(x, x)
=
\begin{cases}
|x|^\g \T(1, 1),\ x \ge 0;\\[0.2cm]
|x|^\g \T(-1, -1), x <0.
\end{cases}
\end{equation*}
This operator is understood as the pseudo-differential operator $\op(a)$ with the
symbol
\begin{equation}\label{symbola:eq}
a(x, \xi) = |\xi| + 2^{-1}\t(x).
\end{equation}
For the sake of completeness recall that $P=\op(p)$ is a pseudo-differential
operator with the symbol $p = p(x, \xi)$ if
\begin{equation*}
(Pu)(x) = \frac{1}{2\pi}\int\int e^{i(x-y)\xi} p(x, \xi) u(y) dy d\xi
\end{equation*}
for any Schwartz class function $u$.
The operator $\BA$ is essentially self-adjoint on
$\plainC\infty_0(\R)$, and has a purely discrete
spectrum (see e.g. \cite{Sch}, Theorems 26.2, 26.3). Using the von Neumann Theorem
(see e.g. \cite{RS2}, Theorem X.25), one can see that $\BA$ is self-adjoint
on $D(\BA) = D(|D_x|)\cap D(|x|^{\g })$,
i.e. $D(\BA) = \plainH1(\R)\cap\plainL2(\R, |x|^{2\g})$.
Denote by $\l_l>0$, $l = 1, 2, \dots$ the eigenvalues of $\BA$ arranged in ascending order,
and by $\phi_l$ -- a set of corresponding normalized eigenfunctions.
As shown in Lemma \ref{positivity:lem}, the lowest eigenvalue $\l_1$ is
non-degenerate and its eigenfunction $\phi_1$
can be chosen to be non-negative a.a. $x\in \R$.
From now on we always choose $\phi_1$ in this way.

The main result of
this paper is contained in the next theorem.

\begin{thm}\label{main:thm}
Let $\BK_\b$ be an integral operator defined by
\eqref{bkb:eq} with $\g \geq 1$. Suppose that
the function $\T$ satisfies conditions \eqref{Theta:eq}, \eqref{Theta1:eq}
and the following Lipshitz conditions:
\begin{equation}\label{lip:eq}
\begin{cases}
|\T(t, 1) - \T(1, 1)|\le C|t-1|, \ t\in (1-\e, 1+\e),\\[0.2cm]
|\T(t, -1) - \T(-1, -1)|\le C|t+1|, \  t\in (-1-\e, -1 +\e),
\end{cases}
\end{equation}
with some $\e >0$.
Let $\SM_\b$ be the largest eigenvalue of the operator $\BK_\b$
and $\Psi_\b$ be the corresponding eigenfunction. Then
\begin{equation*}
\lim_{\b\to 0}\b^{-\frac{2}{\g+1}} (1-\SM_{\b}) = \l_1.
\end{equation*}
Moreover, the rescaled eigenfunctions
 $\a^{-\frac{1}{2}}\Psi_\b(\a^{-1}\ \cdot\ ),\ \a = \b^{\frac{2}{\g+1}}$,
converge in norm to $\phi_1$ as $\b\to 0$.
\end{thm}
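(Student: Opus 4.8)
The plan is to combine a rescaling argument with the positivity-improving structure of $\BK_\b$, converting the eigenvalue problem for $\BK_\b$ into a perturbation of the model operator $\BA$. First I would introduce the unitary dilation $(\CU_\a f)(x) = \a^{1/2} f(\a x)$ with $\a = \b^{2/(\g+1)}$, and study the conjugated operator $\BL_\b = \a^{-1}(\BI - \CU_\a^{-1}\BK_\b\CU_\a)$. Using the Fourier-transform identity \eqref{m:eq} and homogeneity \eqref{Theta:eq}, a direct computation shows that the kernel of $\CU_\a^{-1}\BK_\b\CU_\a$ is, after rescaling, $\pi^{-1}\a^{-1}\big(\a^{-2} + (x-y)^2 + \a^{-2}\b^2\a^{\g}\T(x,y)\big)^{-1}$, and since $\b^2\a^{\g-1} = \a^{\g+1}\a^{\g-1} = \b^2\cdot\b^{2(\g-1)/(\g+1)}$—one checks $\b^2\a^{\g} = \a^{\g+1}\cdot\a^{\g}$ and the exponents match so that the $\T$-term survives at order $\a$. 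The upshot is that $\BL_\b$ should converge, in an appropriate sense, to $\BA = |D_x| + \tfrac12\t(x)$: the $|D_x|$ comes from expanding $e^{-\a|\xi|}$ (the symbol of the rescaled Poisson kernel) as $1 - \a|\xi| + O(\a^2|\xi|^2)$, and the $\tfrac12\t(x)$ comes from the $\T(x,y)$ term evaluated near the diagonal, where the Lipschitz conditions \eqref{lip:eq} control the error.

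Next I would make the convergence $\BL_\b \to \BA$ precise enough to extract spectral information. The cleanest route is a two-sided quadratic-form bound: for $u$ in a suitable form core (Schwartz functions, say), show
\[
\langle \BL_\b u, u\rangle = \langle \BA u, u\rangle + o(1)\big(\|u\|^2 + \langle\BA u, u\rangle\big)
\]
as $\b\to 0$, with the error uniform on form-bounded sets. The lower spectral bound $\liminf \a^{-1}(1-\SM_\b) \ge \l_1$ then follows from the variational principle: $\a^{-1}(1-\SM_\b) = \inf\,\mathrm{spec}\,\BL_\b$ is bounded below by $\l_1 - o(1)$ once one shows $\BL_\b \ge \BA - o(1)(\BI + \BA)$ as forms, which requires a uniform (in $\b$) lower bound of the form $\BL_\b \ge c|D_x| + c\t(x) - C$ to tame the error term. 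For the upper bound $\limsup \a^{-1}(1-\SM_\b) \le \l_1$, I would use the normalized eigenfunction $\phi_1$ of $\BA$ as a trial function: $\a^{-1}(1-\SM_\b) \le \langle \BL_\b \phi_1,\phi_1\rangle \to \langle\BA\phi_1,\phi_1\rangle = \l_1$. Together these give the eigenvalue asymptotics.

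For the eigenfunction convergence, the key input is that both $\l_1$ (for $\BA$) and $\SM_\b$ (for $\BK_\b$, hence $\inf\mathrm{spec}\,\BL_\b$) are \emph{simple and isolated}, the latter precisely because $\BK_\b$ is positivity improving—this is where Theorem XIII.43 of \cite{RS4} and Lemma \ref{positivity:lem} enter. Once the bottom of the spectrum of $\BL_\b$ converges to the simple isolated eigenvalue $\l_1$ of $\BA$, a standard argument (e.g. via convergence of spectral projections, using the form convergence plus the uniform coercivity $\BL_\b \ge c(\BI+\BA) - C$ to get a compactness/norm-resolvent statement on the relevant spectral subspace) forces $\CU_\a^{-1}\Psi_\b \to \phi_1$ in $\plainL2(\R)$, up to sign; positivity of both $\Psi_\b$ and $\phi_1$ fixes the sign. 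Unwinding the dilation gives exactly $\a^{-1/2}\Psi_\b(\a^{-1}\,\cdot\,)\to\phi_1$.

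The main obstacle I anticipate is the \emph{uniform coercivity and form-convergence estimates near the diagonal}: the kernel $K_\b$ is smooth but only Lipschitz in $\T$ on the unit circle, and after rescaling the $\T(x,y)$ term must be compared with its diagonal value $\t(x) = \T(x,x)$ on a window of width $\sim 1$ around $x = y$ (the width of the Poisson kernel), with errors that are genuinely $o(\a)$ relative to the leading terms. Controlling this requires splitting the $y$-integration into a near-diagonal region, where \eqref{lip:eq} and homogeneity give $|\T(x,y)-\t(x)| \le C|x|^{\g-1}|x-y|$ (hence an error absorbed into lower-order terms after integrating the Poisson kernel), and a far region, where one must show the contribution is negligible—this is where the hypothesis $\g\ge 1$ and the positivity/decay of the kernel are used. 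Establishing the lower bound $\BL_\b \ge c|D_x| + c\t(x) - C$ uniformly in small $\b$, which is needed to make all the $o(1)$'s in the form estimates legitimate as operator statements, is the technically delicate core of the argument.
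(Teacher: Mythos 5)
Your overall skeleton (rescaling by $\a=\b^{2/(\g+1)}$, comparison of the rescaled operator with $I-\a\BA$, upper bound by testing with $\phi_1$, simplicity of the extremal eigenvalues via the positivity-improving property) matches the paper's strategy, and your identification of the near-diagonal replacement of $\T(x,y)$ by $\t(x)$ as the place where \eqref{lip:eq} enters is exactly right (this is the paper's ``de-symmetrization'' lemma, proved with the Schur test). But the mechanism you propose for the lower bound and for the eigenfunction convergence rests on an inequality that is false. You ask for a uniform coercivity bound $\BL_\b \ge c|D_x|+c\,\t(x)-C$ (equivalently $\BL_\b\ge \BA-o(1)(\BI+\BA)$) with $\BL_\b=\a^{-1}(I-\CU_\a^{-1}\BK_\b\CU_\a)$. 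For each fixed $\b>0$ the operator $\BL_\b$ is \emph{bounded}, with norm at most $\a^{-1}(1+\|\BK_\b\|)\le 2\a^{-1}$, whereas $c|D_x|+c\,\t(x)-C$ is unbounded above; testing with $u$ concentrated at frequencies $|\xi|\gg\a^{-1}$ (or at $|x|$ large) violates the inequality. Concretely, the symbol of the rescaled difference-kernel part is $\a^{-1}(1-e^{-\a|\xi|})$, which saturates at $\a^{-1}$ instead of growing like $|\xi|$, so no form comparison with $\BA$ can hold uniformly over all of $\plainL2$. Consequently the variational argument ``$\inf\mathrm{spec}\,\BL_\b\ge\l_1-o(1)$'' does not follow from form convergence on fixed form-bounded sets, because the minimizer $\psi_\a$ is not a priori confined to such a set.

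The correct substitute — and the genuinely new ingredient you are missing — is a \emph{tightness} argument for the extremal eigenfunctions (the paper adapts Widom's Lemma 7). From the already-established upper bound $1-\mu_\a\le 2\a\l_1$ and pointwise domination of the kernel by Poisson-type kernels $S_\a(\cdot\,;h)$, one shows directly that $\|\hat\psi_\a\chi_R\|^2\ge 1-4\l_1/R$ and $\|\psi_\a\chi_R\|\ge 1-4\a\l_1-CR^{-\g}$ uniformly in small $\a$: the eigenfunction cannot put mass at high frequencies or at large $|x|$, for otherwise $\mu_\a$ would drop below $1-2\a\l_1$. This yields norm-convergent subsequences $\psi_{\a_k}\to\psi$ with $\|\psi\|=1$. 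The limit is then identified not by resolvent convergence but by passing to the limit in the weak form of the eigenvalue equation tested against each eigenfunction $\phi_l$ of $\BA$: one finds $\a^{-1}(1-\mu_\a)\to\l_l$ whenever $(\psi,\phi_l)\ne 0$, whence $\psi$ is an eigenfunction of $\BA$, and the upper bound together with positivity forces $\psi=\phi_1$ and the limit $\l_1$. If you replace your coercivity step by this tightness-plus-identification argument, the rest of your outline goes through.
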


The top eigenvalue of $\BK_\b$ was studied by B. Mityagin in \cite{M} for
$\Theta(x, y) = (x^2+y^2)^\s$, $\s>0$.
It was conjectured that $\lim_{\b\to0}\b^{-{\frac{2}{2\s+1}}}(1-\SM_\b) = L$ with some $L>0$, but
only the  two-sided bound
\[
c\b^{\frac{2}{2\s+1}}\le 1 - \SM_\b \le C\b^{\frac{2}{2\s+1}},
\]
with some constants $0< c\le C$ was proved.
It was also conjectured that
in the case $\s = 2$ the constant $L$ should coincide with
the lowest eigenvalue of
the operator $|D_x|+ 4x^4$.
Note that for this case the corresponding operator \eqref{modela:eq}
is in fact $|D_x| + 2x^4$.
J. Adduci found an approximate numerical value $\lambda_1 = 0.978...$ in this case,
see \cite{Add}.

Similar eigenvalue asymptotics were investigated by H. Widom in \cite{Widom}
for integral operators with difference kernels. Some ideas of this paper are used in the proof
of Theorem \ref{main:thm}.

Let us now establish the non-degeneracy of the eigenvalue $\l_1$.

\begin{lem} \label{positivity:lem} Let $\BA$ be as defined in \eqref{modela:eq}. Then
\begin{enumerate}
\item
The semigroup $e^{-t\BA}$ is positivity improving for all $t>0$,
\item
The lowest  eigenvalue $\l_1$ is non-degenerate, and
the corresponding eigenfunction $\phi_1$ can be chosen to be positive a.a. $x\in \R$.
\end{enumerate}
\end{lem}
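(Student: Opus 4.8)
The plan is to realize $e^{-t\BA}$ as a limit (or perturbation) of positivity-preserving semigroups and then invoke the standard Perron--Frobenius machinery for positivity improving semigroups (\cite{RS4}, Theorem XIII.44, or \cite{Davies}, Theorem 13.3.6) to deduce (2) from (1). First I would recall that $|D_x|$ generates the Cauchy (Poisson) semigroup $e^{-t|D_x|}$, whose kernel is precisely $m_t(x-y)$ with $m_t$ as in \eqref{m:eq}; since $m_t(x-y) = \frac{t}{\pi}\bigl(t^2 + (x-y)^2\bigr)^{-1} > 0$ everywhere, the semigroup $e^{-t|D_x|}$ is positivity improving for every $t>0$. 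Separately, the multiplication operator $V(x) = 2^{-1}\t(x)$ is non-negative, measurable and locally bounded, so $e^{-tV}$ is multiplication by $e^{-tV(x)}$, which is positivity preserving (it maps non-negative functions to non-negative functions, though it is not positivity improving).

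The next step is to combine these two by the Trotter product formula: since $\BA = |D_x| + V$ is the form sum (indeed, by the von Neumann argument quoted in the text, $\BA$ is self-adjoint on $D(|D_x|)\cap D(|x|^\g)$ and $|D_x|$, $V$ are each non-negative and self-adjoint on their natural domains), we have
\begin{equation*}
e^{-t\BA} = \textrm{s-}\lim_{n\to\infty}\bigl(e^{-\frac{t}{n}|D_x|}\, e^{-\frac{t}{n}V}\bigr)^n .
\end{equation*}
Each factor $e^{-\frac{t}{n}|D_x|}$ is positivity improving and each $e^{-\frac{t}{n}V}$ is positivity preserving; a product of positivity preserving operators in which at least one factor (in fact all the Poisson factors) is positivity improving is again positivity improving, and this property is preserved under the strong limit provided one checks it on the generating cone — concretely, for $u\ge 0$, $u\not\equiv 0$, and $v\ge 0$, $v\not\equiv 0$, one shows $\langle v, e^{-t\BA}u\rangle > 0$ by passing to the limit from $\langle v, (e^{-\frac{t}{n}|D_x|}e^{-\frac{t}{n}V})^n u\rangle$, each term of which is strictly positive and which converges. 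Alternatively, one can bypass Trotter entirely: use the monotone bound $e^{-tV(x)}\le 1$ together with the Feynman--Kac-type inequality $e^{-t\BA}\ge e^{-tC}\,e^{-t|D_x|}$ valid when $V$ is globally bounded, and reduce the unbounded case to this by the domain monotonicity obtained from truncating $\t$ to $\t_R = \min(\t, R)$ and letting $R\to\infty$ (the corresponding operators $\BA_R$ increase to $\BA$ in the strong resolvent sense, and $e^{-t\BA_R}\to e^{-t\BA}$ strongly, with each $e^{-t\BA_R}$ positivity improving by the bounded-potential case). This establishes part (1).

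For part (2), given that $e^{-t\BA}$ is positivity improving and $\BA$ has purely discrete spectrum (as quoted from \cite{Sch}), the bottom of the spectrum $\l_1 = \inf\spec(\BA)$ is an eigenvalue, and $e^{-t\l_1} = \|e^{-t\BA}\|$ is the top eigenvalue of the positivity improving compact operator $e^{-t\BA}$ for each $t>0$. By the Perron--Frobenius theorem for positivity improving operators (\cite{RS4}, Theorem XIII.43), $e^{-t\l_1}$ is a simple eigenvalue of $e^{-t\BA}$ and its eigenfunction may be chosen strictly positive a.e.; since the spectral subspace of $\BA$ for $\l_1$ equals the eigenspace of $e^{-t\BA}$ for $e^{-t\l_1}$, we conclude $\l_1$ is non-degenerate and $\phi_1$ can be chosen positive a.a.\ $x\in\R$. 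The main obstacle is the first step: $\t$ is unbounded (it grows like $|x|^\g$), so the cleanest statements of "positivity improving of $e^{-t(|D_x|+V)}$" for bounded $V$ do not apply directly, and care is needed to justify either the Trotter limit or the truncation limit — in particular one must confirm that strict positivity of the pairing $\langle v, e^{-t\BA}u\rangle$ survives the limit, which it does because the approximants are monotone (in the truncation approach) or because positivity of a convergent sequence of strictly positive numbers with a uniform-on-compacts lower bound passes to the limit. This is exactly the kind of argument carried out for Schrödinger semigroups in \cite{RS4}, Section XIII.12, and it transfers verbatim with $-\Delta$ replaced by $|D_x|$, since all that is used about the free semigroup is that its kernel is everywhere positive.
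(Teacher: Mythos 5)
Your overall strategy coincides with the paper's: show that $e^{-t\BA}$ is positivity improving by comparing with the Poisson semigroup $e^{-t|D_x|}$ (whose kernel $m_t(x-y)$ is strictly positive), and then deduce (2) from (1) by the Perron--Frobenius theorem for positivity improving semigroups. That deduction of (2), and your treatment of the bounded--potential case (via Trotter and the bound $e^{-t(|D_x|+V)}\ge e^{-t\|V\|_\infty}e^{-t|D_x|}$), are both sound.

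The gap is in the passage to the unbounded potential $2^{-1}\t(x)$, which is the actual content of part (1). Both of your proposed justifications for carrying ``positivity improving'' through the limit fail. First, a convergent sequence of strictly positive numbers $\langle v,(e^{-\frac{t}{n}|D_x|}e^{-\frac{t}{n}V})^n u\rangle$ need only have a non-negative limit; strong limits of positivity improving operators are in general only positivity preserving, and you never establish the ``uniform-on-compacts lower bound'' you appeal to (doing so honestly would amount to analysing the killed semigroup $(e^{-\frac{t}{n}|D_x|}\chi_K)^n$, a separate argument). Second, the monotonicity in the truncation approach runs the wrong way: if $\t_R=\min(\t,R)$ increases to $\t$, then $e^{-t\BA_R}$ \emph{decreases} to $e^{-t\BA}$, so the strictly positive quantities $\langle v,e^{-t\BA_R}u\rangle$ decrease and could a priori tend to $0$. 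The paper closes exactly this gap by invoking \cite{RS4}, Theorem XIII.45, whose hypothesis is a \emph{two-sided} approximation by bounded potentials: with $V_n$ the truncation of $2^{-1}\t$ at $|x|=n$, one has both $\BA_0+V_n\to\BA$ and $\BA-V_n\to\BA_0$ in the strong resolvent sense (via \cite{RS1}, Theorem VIII.25a), and that theorem then transfers the positivity improving property from $e^{-t\BA_0}$ to $e^{-t\BA}$. You gesture at the right place (\cite{RS4}, Section XIII.12), but you should cite and verify the hypotheses of that specific theorem rather than argue that strict positivity survives the strong limit, which as stated it need not.
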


\begin{proof}  The non-degeneracy of $\l_1$ and positivity of the eigenfunction $\phi_1$
would follow from the fact that $e^{-t\BA}$ is positivity improving for all $t>0$, see
\cite{RS4}, Theorem XIII.44. The proof of this fact
is done by comparing the semigroups for the operators $\BA$ and $\BA_0 = |D_x|$.
Using \eqref{m:eq} it is straightforward to find the integral kernel of $e^{-t\BA_0}$:
\[
m_t(x-y) = \frac{1}{\pi}\frac{t}{t^2+(x-y)^2}, t >0,
\]
which shows that $e^{-t\BA_0}$ is positivity improving. To extend the same conclusion
to $e^{-t\BA}$ let
\[
V_n(x)
=
\begin{cases}
2^{-1}\t(x),\ |x|\le n,\\[0.2cm]
2^{-1}\t(\pm n), \pm x > n,
\end{cases}
n = 1, 2, \dots.
\]
Since $(\BA_0+V_n)f\to \BA f$ and $(\BA-V_n)f\to \BA_0 f$ as $n\to\infty$
for any $f\in\plainC\infty_0(\R)$, by \cite{RS1}, Theorem VIII.25a the operators
$\BA_0 + V_n$ and $\BA - V_n$ converge
to $\BA$ and $\BA_0$ resp. in the strong resolvent sense as $n\to\infty$. Thus by
\cite{RS4}, Theorem XIII.45, the semigroup $e^{-t\BA}$ is also positivity improving
for all $t >0$, as required.
\end{proof}

\subsection{Rescaling}
As a rule, instead of $\BK_\b$ it is more convenient to work with
the operator obtained by rescaling $x\to \a^{-1}x$ with $\a >0$. Precisely,
let $U_\a$ be the unitary operator on $\plainL2(\R)$ defined as
$(U_{\a}f)(x) = \a^{-\frac{1}{2}} f(\a^{-1} x)$. Then
 $U_\a \BK_\b U_\a^*$ is the integral operator with
the kernel
\begin{equation*}
\frac{\a}{\pi}\frac{1}{\a^2 + (x-y)^2 + \b^2 \a^{-\g+2}\Theta(x, y)}.
\end{equation*}
Under the assumption $\b^2 = \a^{\g+1}$, this kernel becomes
\begin{equation}\label{B:eq}
B_\a(x, y) = \frac{\a}{\pi} \frac{1}{\a^2+(x-y)^2 + \a^3\Theta(x, y)}.
\end{equation}
Thus, denoting the corresponding integral operator by $\BB_\a$, we get
\begin{equation}\label{alpha:eq}
\BK_\b = U_\a^* \BB_\a U_\a,\ \a = \b^{\frac{2}{\g+1}}.
\end{equation}
Henceforth the value of $\a$ is always chosen as in this formula.

Denote by $\mu_\a$
the maximal eigenvalue of the operator
$\BB_{\a}$, and by $\psi_\a$
-- the corresponding normalized eigenfunction.
By the same token as for the operator
$\BK_\b$,
the eigenvalue $\mu_\a$ is non-degenerate and the choice of the corresponding eigenfunction
$\psi_\a$ is determined uniquely by the requirement that $\psi_\a > 0$ a.e.. Moreover,
\begin{equation}\label{unitary:eq}
\mu_\a = \SM_{\b}, \ \psi_\a(x) = (U_\a \Psi_\b)(x) = \a^{-\frac{1}{2}}\Psi_\b(\a^{-1} x), \
\a = \b^{\frac{2}{\g+1}}.
\end{equation}
This rescaling allows one to rewrite Theorem \ref{main:thm} in a somewhat more
compact form:

\begin{thm}\label{main_b:thm} Let $\g\ge 1$ and suppose that
the function $\T$ satisfies conditions \eqref{Theta:eq}, \eqref{Theta1:eq}
and \eqref{lip:eq}. Then
\begin{equation*}
\lim_{\a\to 0}\a^{-1} (1-\mu_\a) = \l_1.
\end{equation*}
Moreover, the eigenfunctions $\psi_\a$,
converge in norm to $\phi_1$ as $\a\to 0$.
\end{thm}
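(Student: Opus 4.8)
The plan is to pass to the nonnegative self-adjoint operator $\mathbf C_\a := \a^{-1}(I - \BB_\a)$. The pointwise bound $B_\a(x,y) \le m_\a(x-y)$ and \eqref{m:eq} give $\langle \BB_\a|u|,|u|\rangle \le \|u\|^2$, hence $\|\BB_\a\| \le 1$, so $\mathbf C_\a \ge 0$; since $\BB_\a$ is positivity improving, $\mu_\a$ is simple with $\psi_\a > 0$, and therefore $\nu_\a := \a^{-1}(1 - \mu_\a)$ is the lowest point of the spectrum of $\mathbf C_\a$, a simple eigenvalue with eigenfunction $\psi_\a > 0$. Thus Theorem~\ref{main_b:thm} is equivalent to the two statements $\nu_\a \to \l_1$ and $\psi_\a \to \phi_1$ in $\plainL2(\R)$.

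Next I would split $\mathbf C_\a = \mathbf C_\a^{\textup{kin}} + \mathbf C_\a^{\textup{pot}}$, where $\mathbf C_\a^{\textup{kin}} = \a^{-1}(I - e^{-\a|D_x|})$ is the Fourier multiplier by $g_\a(\xi) = \a^{-1}(1 - e^{-\a|\xi|})$ and $\mathbf C_\a^{\textup{pot}}$ is the integral operator with kernel
\[
K_\a^{\textup{pot}}(x,y) := \frac{1}{\a}\bigl(m_\a(x-y) - B_\a(x,y)\bigr)
= \frac{1}{\pi}\,\frac{\a^3\,\T(x,y)}{\bigl(\a^2+(x-y)^2\bigr)\bigl(\a^2+(x-y)^2+\a^3\T(x,y)\bigr)} \geq 0 .
\]
Both summands are nonnegative. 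One checks $g_\a(\xi) \uparrow |\xi|$ as $\a \downarrow 0$, whence $\langle\mathbf C_\a^{\textup{kin}}u,u\rangle \to \langle|D_x|u,u\rangle$; and the substitution $y = x + \a z$, the identity $\int_\R(1+z^2)^{-2}\,dz = \pi/2$, and $\T(x, x+\a z) \to \t(x)$ (quantified by the homogeneity \eqref{Theta:eq} and the Lipshitz bounds \eqref{lip:eq}) give $\langle\mathbf C_\a^{\textup{pot}}v, v\rangle \to \tfrac12\int\t(x)|v(x)|^2\,dx$ for $v \in \co(\R)$, hence $\langle\mathbf C_\a v, v\rangle \to \langle\BA v, v\rangle$ there. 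The upper bound is then immediate from the variational principle: taking $v$ a smooth compactly supported approximation of $\phi_1$ in the form norm of $\BA$, $\nu_\a \le \langle\mathbf C_\a v,v\rangle/\|v\|^2 \to \langle\BA v,v\rangle/\|v\|^2$, so $\limsup_{\a\to0}\nu_\a \le \l_1$.

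The substantive part is the matching lower bound together with convergence of the eigenfunctions, which I would obtain by a compactness argument relying crucially on $\psi_\a > 0$. Since $\mathbf C_\a^{\textup{kin}}, \mathbf C_\a^{\textup{pot}} \ge 0$ and $\langle\mathbf C_\a\psi_\a,\psi_\a\rangle = \nu_\a \le \l_1 + o(1)$, we have $\langle\mathbf C_\a^{\textup{kin}}\psi_\a,\psi_\a\rangle \le C$ and $\langle\mathbf C_\a^{\textup{pot}}\psi_\a,\psi_\a\rangle \le C$. As $g_\a(\xi) \ge c\min(|\xi|,\a^{-1})$, the first bound makes $\{\psi_\a\}$ bounded in $\plainH{1/2}$ up to an $\plainL2$-vanishing high-frequency part, hence precompact in $\plainL2_{\textup{loc}}$. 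The second bound gives tightness: using $\psi_\a \ge 0$ one restricts $\langle\mathbf C_\a^{\textup{pot}}\psi_\a,\psi_\a\rangle$ to the diagonal strip $\{|x-y|\le\a,\ |x|\ge R_0\}$, where $K_\a^{\textup{pot}}(x,y) \gtrsim \a^{-1}\t(x)$ for $\a\t(x)\lesssim1$ (Lipshitz comparison of $\T(x,y)$ with $\t(x)$), and, replacing $\int_{|y-x|\le\a}\psi_\a(y)\,dy$ by $\asymp \a\psi_\a(x)$ (this is where the a priori $\plainH{1/2}$ bound is used), derives $\int_{|x|\ge R_0}\t(x)|\psi_\a(x)|^2\,dx \le C$, whence $\sup_\a\int_{|x|>R}|\psi_\a|^2 \le CR^{-\g}\to0$; thus $\{\psi_\a\}$ is precompact in $\plainL2(\R)$. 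Along a subsequence $\psi_\a \to \psi$ in $\plainL2(\R)$ with $\|\psi\|=1$, $\psi\ge0$, and $\nu_\a\to\nu\le\l_1$. Lower semicontinuity of both forms — for the kinetic part from $g_\a\to|\xi|$ uniformly on compacts together with $\hat\psi_\a\to\hat\psi$; for the potential part from $\langle\mathbf C_\a^{\textup{pot}}(\chi_R\psi_\a),\chi_R\psi_\a\rangle \le \langle\mathbf C_\a^{\textup{pot}}\psi_\a,\psi_\a\rangle$ (as $0\le\chi_R\le1$, $\psi_\a\ge0$, $K_\a^{\textup{pot}}\ge0$), a uniform Schur bound for $\mathbf C_\a^{\textup{pot}}$ restricted to a fixed compact, and the scalar limit above — yields $\langle|D_x|\psi,\psi\rangle + \tfrac12\int\t|\psi|^2 \le \liminf\nu_\a = \nu$, i.e. $\langle\BA\psi,\psi\rangle \le \nu \le \l_1$. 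Since $\l_1$ is the bottom of the spectrum of $\BA$ and $\|\psi\|=1$, equality holds throughout: $\nu=\l_1$, and $\psi$ is a normalized ground state, so $\psi=\phi_1$ by uniqueness of the nonnegative ground state (Lemma~\ref{positivity:lem}). As every subsequence contains such a sub-subsequence, $\nu_\a\to\l_1$ and $\psi_\a\to\phi_1$; undoing the rescaling \eqref{alpha:eq}, \eqref{unitary:eq} proves Theorems~\ref{main_b:thm} and~\ref{main:thm}.

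I expect the tightness/confinement step to be the main obstacle. The kernel $K_\a^{\textup{pot}}$ lives on the thin strip $|x-y|\lesssim\a$ and produces the mass $\asymp\tfrac12\t(x)$ only after integration in $y$, so converting $\langle\mathbf C_\a^{\textup{pot}}\psi_\a,\psi_\a\rangle\le C$ into the weighted bound $\int\t|\psi_\a|^2\le C$ requires controlling the discrepancy between $\int_{|y-x|\le\a}\psi_\a(y)\,dy$ and $2\a\psi_\a(x)$ — equivalently a weighted Gagliardo-type defect $\a^{-1}\int\!\!\int_{|x-y|\le\a}\t(x)(\psi_\a(x)-\psi_\a(y))^2$ — and one has to use the $\plainH{1/2}$ a priori bound delicately so that this defect does not swallow the main term (and to split off the region $|x|\gtrsim\a^{-1/\g}$, where $\a^3\T$ is no longer negligible and a separate, weaker estimate applies). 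These estimates, as well as the identity $K_\a^{\textup{pot}}(x,y)\,dy \to \tfrac12\t(x)\delta(x-y)$, hinge on the Lipshitz hypothesis \eqref{lip:eq} used to compare $\T(x,y)$ with $\t(x)$ for $y$ within $O(\a)$ of $x$.
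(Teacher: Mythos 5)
Your overall architecture --- splitting $\a^{-1}(I-\BB_\a)$ into a nonnegative Fourier multiplier plus a nonnegative ``potential'' kernel, proving the upper bound variationally on $\co(\R)$, extracting a norm-convergent subsequence of $\psi_\a$, and identifying the limit as the positive ground state by lower semicontinuity --- is a viable alternative to the paper's route (the paper instead de-symmetrizes the kernel, writes the resulting operator as $\op(b^{(l)}_\a)$ with $b^{(l)}_\a=1-\a a+r_\a$ and weighted error bounds, and identifies the limit by testing the eigenvalue equation against every eigenfunction $\phi_l$ of $\BA$). The upper bound and the Fourier-side localization $\|\hat\psi_\a\chi_R\|^2\ge 1-C/R$ are sound and essentially match the paper's Lemmas \ref{upper:lem} and \eqref{fourier_below:eq}.

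The genuine gap is the spatial tightness step, exactly where you flag ``the main obstacle.'' Converting $\langle\mathbf C_\a^{\textup{pot}}\psi_\a,\psi_\a\rangle\le C$ into $\int\t\,|\psi_\a|^2\le C$ requires controlling the weighted defect $\a^{-1}\iint_{|x-y|\le\a}\t(x)\bigl(\psi_\a(x)-\psi_\a(y)\bigr)^2$, and the unweighted $\plainH{1/2}$-type a priori bound cannot do this: $\t(x)\sim|x|^\g$ is unbounded, so the defect is only controlled by a \emph{weighted} $\plainH{1/2}$ bound on $\psi_\a$ --- which is precisely the kind of estimate you are trying to establish. As written the argument is circular, and no amount of ``delicate'' use of the unweighted bound closes it; indeed the paper never proves $\int\t|\psi_\a|^2\le C$ at all, only the strictly weaker $\|g_\a^\vark\psi_\a\|\le C$ for $\vark<1$ (Lemma \ref{psi:lem}, via a Schur bound on $g_\a^\vark B_\a$), which is a hint that your intermediate target is too strong. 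The fix, and what the paper actually does, avoids any diagonal restriction: since $\psi_\a\ge0$, one bounds the kernel pointwise by convolution kernels, $B_\a(x,y)\le S_\a(x-y;0)$ everywhere and $B_\a(x,y)\le S_\a(x-y;c\a^3R^\g)$ for $|x|\ge R$, whose operator norms are computed exactly by Fourier transform as in \eqref{st_norm:eq}; comparing the resulting upper bound for $\mu_\a=(\BB_\a\psi_\a,\psi_\a)$ with $\mu_\a\ge1-2\a\l_1$ gives $\|\psi_\a\chi_R\|\ge1-4\a\l_1-CR^{-\g}$ directly. With that lemma substituted for your tightness step, the rest of your compactness-and-lower-semicontinuity scheme goes through.
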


The rest of the paper is devoted to the proof of Theorem \ref{main_b:thm}, which immediately
implies Theorem \ref{main:thm}.

\section{``De-symmetrization" of $\BK_\b$ and $\BB_{\a}$}

First we de-symmetrize the operator $\BK_\b$.
Denote
\begin{equation*}
\BK^{(l)}_\b u(x) = \int K^{(l)}_\b(x, y) u(y) dy,
\end{equation*}
with the kernel
\begin{equation*}
K^{(l)}_\b(x, y) = \frac{1}{\pi}\frac{1}{1 + (x-y)^2 + \b^2 \t(x)}.
\end{equation*}

\begin{lem} Let $\b \le 1$ and $\g \ge 1$.
Suppose that the conditions \eqref{Theta:eq}, \eqref{Theta1:eq} and \eqref{lip:eq} are satisfied.
Then
\begin{equation}\label{kbeta:eq}
\|\BK^{(l)}_\b - \BK_\b\|\le C_q \b^{\frac{2}{\g}}.
\end{equation}
\end{lem}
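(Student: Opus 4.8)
The plan is to control the operator norm of $\BK^{(l)}_\b-\BK_\b$ via Schur's test applied to the kernel
\[
K^{(l)}_\b(x,y) - K_\b(x,y)
= \frac1\pi\,
\frac{\b^{2}\bigl(\T(x,y) - \t(x)\bigr)}
{\bigl(1+(x-y)^{2}+\b^{2}\t(x)\bigr)\bigl(1+(x-y)^{2}+\b^{2}\T(x,y)\bigr)},
\]
which comes from $A^{-1}-B^{-1}=(B-A)/(AB)$. So I would bound both $\sup_x\int|K^{(l)}_\b(x,y)-K_\b(x,y)|\,dy$ and $\sup_y\int|K^{(l)}_\b(x,y)-K_\b(x,y)|\,dx$ by $C\b^{2/\g}$; the two are handled by the same argument, the only asymmetry being the term $\t(x)=\T(x,x)$, which will be dealt with using the symmetry of $\T$ from \eqref{Theta1:eq}. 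Fix $\delta\in(0,1)$ small enough that $\delta/(1-\delta)$ is below the $\e$ of \eqref{lip:eq}, write $y=x+s$ in the first integral, and split the $s$-line into a near-diagonal region $|s|<\delta|x|$ and a far region $|s|\ge\delta|x|$.

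\emph{Near-diagonal region.} Here $x$ and $y$ have the same sign; taking $x>0$ (the case $x<0$ is identical, via the second line of \eqref{lip:eq}), the homogeneity \eqref{Theta:eq} and the symmetry of $\T$ give $\T(x,y)-\t(x)= x^{\g}\bigl(\T(1,y/x)-\T(1,1)\bigr)= x^{\g}\bigl(\T(y/x,1)-\T(1,1)\bigr)$, and since $|y/x-1|=|s|/x<\delta<\e$ the Lipschitz bound \eqref{lip:eq} yields $|\T(x,y)-\t(x)|\le C|x|^{\g-1}|s|$. By \eqref{Theta1:eq} both denominator factors are $\ge 1+s^{2}+c\b^{2}|x|^{\g}$ on this region, so
\[
\int_{|s|<\delta|x|}\bigl|K^{(l)}_\b(x,x+s)-K_\b(x,x+s)\bigr|\,ds
\le C\b^{2}|x|^{\g-1}\int_{\R}\frac{|s|\,ds}{\bigl(1+s^{2}+c\b^{2}|x|^{\g}\bigr)^{2}}
= \frac{C\b^{2}|x|^{\g-1}}{1+c\b^{2}|x|^{\g}}.
\]
Maximising this over $x\ge0$ is an elementary one-variable exercise: the substitution $v=\b^{2}|x|^{\g}$ turns it into $C\b^{2/\g}\,v^{(\g-1)/\g}/(1+cv)$, and since $\g\ge1$ the factor $v^{(\g-1)/\g}/(1+cv)$ is bounded on $[0,\infty)$, with its maximum near $v\asymp1$, i.e.\ $|x|\asymp\b^{-2/\g}$. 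Thus the near-diagonal region contributes $\le C\b^{2/\g}$ — this is where the exponent $2/\g$ is born.

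\emph{Far region.} Here $|x|\le\delta^{-1}|s|$ and $|y|\le|x|+|s|\le C_\delta|s|$, so \eqref{Theta1:eq} gives the crude bound $|\T(x,y)-\t(x)|\le\T(x,y)+\t(x)\le C_\delta|s|^{\g}$, while simultaneously $\T(x,y)\ge c(x^{2}+y^{2})^{\g/2}\ge c'|s|^{\g}$; hence one denominator factor is $\ge 1+s^{2}+c'\b^{2}|s|^{\g}$ and the other is $\ge 1+s^{2}$. Therefore
\[
\int_{|s|\ge\delta|x|}\bigl|K^{(l)}_\b(x,x+s)-K_\b(x,x+s)\bigr|\,ds
\le C_\delta\,\b^{2}\int_{\R}\frac{|s|^{\g}\,ds}{(1+s^{2})\bigl(1+s^{2}+c'\b^{2}|s|^{\g}\bigr)}.
\]
I would estimate the last integral by cutting $\R$ at $|s|=1$ and at $|s|=\b^{-2/\g}$ (recall $\b\le1$, so $\b^{-2/\g}\ge1$): on $|s|\le1$ the integrand is $O(1)$; on $1\le|s|\le\b^{-2/\g}$ one drops $\b^{2}|s|^{\g}$ and integrates $s^{\g-4}$; on $|s|\ge\b^{-2/\g}$ one uses $1+s^{2}+c'\b^{2}|s|^{\g}\ge c'\b^{2}|s|^{\g}$ to reduce to $C\b^{-2}\int s^{-2}\,ds$. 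Using $\g\ge1$, so that $\b^{2}\le\b^{2/\g}$ absorbs the order-one scale, each of the three pieces is $\le C\b^{2/\g}$.

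\emph{Column integral and the main difficulty.} For $\sup_y\int|K^{(l)}_\b-K_\b|\,dx$ one writes $x=y+s$ and repeats the argument: in the near-diagonal region the symmetry of $\T$ together with homogeneity again gives $\T(x,y)-\T(x,x)=x^{\g}(\T(y/x,1)-\T(1,1))$ with $|y/x-1|\le\delta/(1-\delta)<\e$, hence $|\T(x,y)-\t(x)|\le C|y|^{\g-1}|s|$ (as $x$ and $y$ are comparable there), the denominators are $\ge1+s^{2}+c\b^{2}|y|^{\g}$, and the far region is unchanged. Schur's test then gives $\|\BK^{(l)}_\b-\BK_\b\|\le C\b^{2/\g}$. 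The one genuinely delicate step is the near-diagonal estimate: the hypothesis \eqref{lip:eq} only constrains $\T$ along the two rays through $(\pm1,\pm1)$, and it must be transported, through the homogeneity \eqref{Theta:eq}, to a bound on $\T(x,y)-\T(x,x)$ valid for all small $|x-y|/|x|$, with the sign of $x$ and the compatibility of $\delta$ with $\e$ watched carefully; once that is in place everything reduces to the one-dimensional integral estimates above, the role of $\g\ge1$ being merely to ensure $\b^{2}\le\b^{2/\g}$.
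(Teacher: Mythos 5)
Your proof is correct and rests on the same ingredients as the paper's: the local Lipschitz condition transported by homogeneity to a bound $|\T(x,y)-\T(x,x)|\le C(|x|+|y|)^{\g-1}|x-y|$ near the diagonal, the crude homogeneity bound away from it, and the Schur test, with the exponent $2/\g$ emerging from the scale $|x|\asymp\b^{-2/\g}$. The only difference is organizational: the paper first packages the near/far dichotomy into a single global pointwise estimate and then extracts $\b^{2/\g}$ by splitting the product of denominators with a parameter $\d\in[1-\g^{-1},1)$ and maximizing $t^{\g-1}/(1+t^\g)^\d$, whereas you carry the region splitting through the integral estimates explicitly; both executions are sound.
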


\begin{proof}
Due to \eqref{Theta:eq} and \eqref{Theta1:eq},
\begin{equation}\label{both:eq}
c(|t| + 1)^\g\le \T(t, \pm 1)\le C(|t| + 1)^\g, \ \ t\in\R.
\end{equation}
Also,
\begin{equation}\label{lip1:eq}
\begin{cases}
|\Theta(t, 1) - \Theta(1, 1)| \le  C  (|t|+1)^{\g-1} |t - 1|,\\[0.2cm]
|\Theta(t, -1) - \Theta(-1, -1)| \le  C  (|t|+1)^{\g-1} |t + 1|,
\end{cases}
\end{equation}
for all $t\in\R$. Indeed,   \eqref{lip:eq} leads to the first inequality \eqref{lip1:eq}
for $|t-1|<\e$. For $|t-1|\ge \e$ it  follows from \eqref{both:eq} that
\[
|\T(t, 1) - \T(1, 1)|\le C(|t|+1)^\g\le C' \e^{-1}(|t|+1)^{\g-1}|t-1|.
\]
The second bound in \eqref{lip1:eq} is checked similarly.

Now we can estimate the difference of the kernels
\begin{align}\label{kdiff:eq}
K_\b(x, y) - &\ K^{(l)}_\b(x, y)\notag\\[0.2cm]
= &\ \frac{1}{\pi} \frac{\b^2 \bigl(\Theta(x, x) - \Theta(x, y)\bigr)}
{\bigl(1+(x-y)^2 + \b^2 \T(x, y)\bigr)
\bigl(1+(x-y)^2 +  \b^2 \T(x, x)\bigr)}.
\end{align}
It follows from \eqref{lip1:eq} with $ t = y |x|^{-1}$ that
\begin{equation*}
|\Theta(x, x) - \Theta(y, x)|
\le C(|x| + |y|)^{\g-1} |x-y|.
\end{equation*}
Substituting into \eqref{kdiff:eq}, we get
\begin{equation*}
|K_\b(x, y) - K^{(l)}_\b(x, y)| \le C
\frac{|x-y|}{(1+(x-y)^2)^{2-\d}}\
\frac{\b^2  \bigl(|x| + |y|\bigr)^{\g-1}}
{(1 + \b^2 (|x| + |y|)^\g)^\d},
\end{equation*}
for any $\d \in (0, 1)$.
The second factor on the right-hand side does not exceed
\begin{equation*}
\b^{\frac{2}{\g}} \max_{t\ge 0}\frac{t^{\g-1}}{(1+ t^\g)^\d},
\end{equation*}
which is bounded by $C\b^{2/\g}$ under the assumption that $\d\ge 1 - \g^{-1}$.
Therefore
\begin{equation*}
|K_\b(x, y) - K^{(l)}_\b(x, y)| \le C\b^{\frac{2}{\g}}
\frac{|x-y|}{(1+(x-y)^2)^{2-\d}}.
\end{equation*}
For any $\d\in (0, 1)$ the right hand side is integrable in $x$ (or $y$). Now,
estimating the norm using the standard Schur Test, see Proposition
\ref{schur:prop}, we conclude that
\begin{equation*}
\|\BK_\b - \BK^{(l)}_\b\|
\le C\b^{\frac{2}{\g}}\int\frac{|t|}{(1+t^2)^{2-\d}}dt
\le C'\b^{\frac{2}{\g }},
\end{equation*}
which is the required bound.
\end{proof}

Similarly to the operator $\BK_\b$, it is readily checked by scaling that
the operator $\BK^{(l)}_\b$ is unitarily equivalent to the
operator $\BB^{(l)}_\a$ with the kernel
\begin{equation}\label{bl_kernel:eq}
B^{(l)}_\a(x, y) = \frac{1}{\pi}\frac{\a}{\a^2 + (x-y)^2 + \a^3 \t(x)}.
\end{equation}
Thus the bound \eqref{kbeta:eq} ensures that
\begin{equation}\label{k-l:eq}
\|\BB_\a - \BB^{(l)}_\a\| = \|\BK_\b - \BK^{(l)}_\b\| \le C\a^{1+\frac{1}{\g}}, \a\le 1,
\end{equation}
see \eqref{alpha:eq} for the definition of $\a$.

\section{Approximation for $\BB^{(l)}_\a$}

\subsection{Symbol of $\BB^{(l)}_\a$}
Now our aim is to show that the operator $I-\a\BA$ is
an approximation of the operator $\BB^{(l)}_\a$,
defined above.
To this end we need to represent $\BB^{(l)}_\a$ as a pseudo-differential operator.
Rewriting the kernel \eqref{bl_kernel:eq} as
\begin{equation*}
B^{(l)}_\a(x, y) = t^{-1} m_{\a t}(x-y),\ t = g_\a(x),
\end{equation*}
with
\begin{equation}\label{galpha:eq}
g_\a(x) = \sqrt{1+\a \t(x)},
\end{equation}
and using \eqref{m:eq}, we can write for any Schwartz class function $u$:
\begin{equation*}
(\BB^{(l)}_\a u)(x) = \frac{1}{2\pi} \int\int
e^{i(x-y)\xi} b^{(l)}_\a(x, \xi) u(y) dy d\xi,\
\end{equation*}
where
\begin{equation*}
b^{(l)}_\a(x, \xi) = \frac{1}{g_\a(x)}
e^{-\a|\xi| g_\a(x)}.
\end{equation*}
Thus $\BB^{(l)}_\a = \op(b^{(l)}_\a)$.

\subsection{Approximation for $\BB^{(l)}_\a$}
Let the operator $\BA$ and
the symbol $a(x, \xi)$ be as defined in \eqref{modela:eq} and
\eqref{symbola:eq}.
Our first objective is to check that the error
\begin{equation*}
r_\a(x, \xi) := b^{(l)}_\a(x, \xi) - (1 - \a a(x, \xi))
\end{equation*}
is small in a certain sense. The condition $\g \ge 1$ will allow
us to use standard norm estimates for pseudo-differential
operators.
Using the formula
\begin{equation*}
e^{-\a y} = 1 - \a y + \a \int_0^y (1-e^{-\a t}) dt, \ y >0,
\end{equation*}
 we can split the error as follows:
\begin{align*}
r_\a(x, \xi) = &\ r^{(1)}_\a(x) + r^{(2)}_\a(x, \xi),\\[0.2cm]
r^{(1)}_\a(x) = &\ \frac{1}{g(x)} + \a 2^{-1}\t(x) - 1,\\[0.2cm]
r^{(2)}_\a(x, \xi) = &\ \frac{\a}{g(x)} \int_0^{|\xi| g(x)}
(1 - e^{-\a t} )dt,
\end{align*}
where we have used the notation $g(x) = g_\a(x)$ with $g_\a$ defined in
\eqref{galpha:eq}. Since $\g\ge 1$, we have
\begin{equation}\label{g_dash:eq}
|g'(x)|\le C g(x), \ C = C(\g), \ x\not = 0,
\end{equation}
for all $\a\le 1$.
Introduce also the function $\z\in\plainC\infty(\R_+)$
such that
\begin{equation*}
\z'(x) \ge 0,\
\z(x) =
\begin{cases}
x,\ \ 0\le x\le 1;\\
2,\ \ \ x\ge 2.
\end{cases}
\end{equation*}
Note that
\begin{equation}\label{zeta_prod:eq}
\z(x_1 x_2)\le 2\z(x_1) x_2,\ \ x_1\ge 0, x_2\ge 1.
\end{equation}
We study the above components $r^{(1)}$, $r^{(2)}$ separately and
introduce the function
\begin{equation}\label{e1:eq}
e^{(1)}_\a(x) = \frac{1}{ \lu x\ru^{\g} \z(\a\lu x\ru^{\g})} r^{(1)}_\a(x),
\end{equation}
and the symbol
\begin{equation}\label{e2:eq}
e^{(2)}_\a(x, \xi) = g_\a(x)^{-\vark} \bigl(\z\bigl((\a\lu \xi\ru)\bigr)^\vark\
\lu\xi\ru\bigr)^{-1}
r^{(2)}_\a(x, \xi),
\end{equation}
where $\vark \in (0, 1]$ is a fixed number.
To avoid cumbersome notation the dependence of $e^{(2)}_\a$ on $\vark$
is not reflected in the notation.
We denote the operators $\op(r_\a)$ and $\op(e_\a)$ by $\BR_\a$ and $\BE_\a$
respectively (with or without superscripts).

\begin{lem}\label{e1:lem} Let $\g\ge 1$. Then for all $\a>0$,
\begin{equation*}
\| e^{(1)}_\a\|_{\plainL\infty}\le C\a.
\end{equation*}
\end{lem}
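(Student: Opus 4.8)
The plan is to reduce the whole statement to an elementary one-variable estimate. Writing $s = \a\,\t(x)\ge 0$ we have $g(x) = g_\a(x) = \sqrt{1+s}$, and therefore
\[
r^{(1)}_\a(x) = \frac{1}{\sqrt{1+s}} - 1 + \frac{s}{2} = h(s),\qquad
h(s) := (1+s)^{-\frac12} - 1 + \frac{s}{2}.
\]
So everything comes down to comparing $h(s)$ with $s\,\z(s)$ and then transferring from the variable $s = \a\t(x)$ to $\a\lu x\ru^{\g}$, which is the weight appearing in the definition \eqref{e1:eq} of $e^{(1)}_\a$.

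First I would record three elementary facts about $h$ on $[0,\infty)$: (i) $h(0) = 0$ and $h'(s) = \tfrac12\bigl(1 - (1+s)^{-\frac32}\bigr)\ge 0$, so $h\ge 0$ and in particular $|r^{(1)}_\a(x)| = h(\a\t(x))$; (ii) $h(s)\le s/2$, since $(1+s)^{-\frac12}\le 1$; (iii) $h(s)\le \tfrac38 s^2$, obtained from the integral form of the Taylor remainder $h(s) = \int_0^s (s-u)\,h''(u)\,du$ together with $h''(u) = \tfrac34(1+u)^{-\frac52}\le \tfrac34$. Combining (ii) and (iii) with the elementary properties of $\z$ — namely $\z(s) = s$ for $s\le 1$ and $\z(s)\ge \z(1) = 1$ for $s\ge 1$, the latter because $\z'\ge 0$ — gives at once
\[
0\le h(s)\le \tfrac12\, s\,\z(s),\qquad s\ge 0,
\]
since for $s\le 1$ the right-hand side equals $\tfrac12 s^2\ge \tfrac38 s^2\ge h(s)$, and for $s\ge 1$ it is $\ge \tfrac12 s\ge h(s)$.

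It remains to pass from $s = \a\t(x)$ to $\a\lu x\ru^\g$. By homogeneity $\t(x) = |x|^\g\,\T(\pm 1,\pm 1)$ with $\T(\pm1,\pm1)$ positive and finite by \eqref{Theta:eq}--\eqref{Theta1:eq}, hence $\t(x)\le C_0|x|^\g\le C_0\lu x\ru^\g$ with $C_0 = C_0(\g)$. Since $\z$ is non-decreasing,
\[
|r^{(1)}_\a(x)| = h(\a\t(x))\le \tfrac12\,C_0\,\a\,\lu x\ru^\g\,\z\bigl(C_0\,\a\,\lu x\ru^\g\bigr);
\]
if $C_0\le 1$ the last factor is $\le \z(\a\lu x\ru^\g)$ by monotonicity, and if $C_0\ge 1$ the product inequality \eqref{zeta_prod:eq} applied with $x_1 = \a\lu x\ru^\g$, $x_2 = C_0$ gives $\z(C_0\,\a\,\lu x\ru^\g)\le 2C_0\,\z(\a\lu x\ru^\g)$. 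In either case
\[
|r^{(1)}_\a(x)|\le C\,\a\,\lu x\ru^\g\,\z(\a\lu x\ru^\g),
\]
with $C$ independent of $\a$, which by \eqref{e1:eq} is exactly the claim $\|e^{(1)}_\a\|_{\plainL\infty}\le C\a$.

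There is essentially no deep obstacle here. The one point worth a moment's thought is that the bound must be uniform over all $\a>0$, not merely for small $\a$; this is why I would keep the global estimate $h(s)\le\tfrac12 s\,\z(s)$ rather than only the local quadratic bound near $s=0$. The secondary bookkeeping issue is not to lose the multiplicative constant $C_0$ relating $\t(x)$ to $\lu x\ru^\g$ once it is fed into $\z$, and that is precisely what \eqref{zeta_prod:eq} is there to absorb. The region of small $|x|$ (and, dually, small $\a$), where $r^{(1)}_\a$ vanishes like $\a^2|x|^{2\g}$ while the factor $\lu x\ru^\g$ in the weight stays of order one, is handled automatically, since the quadratic bound (iii) is exactly the one in force when $\a\t(x)\le 1$.
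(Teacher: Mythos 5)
Your proof is correct and follows essentially the same route as the paper: a quadratic Taylor bound for $r^{(1)}_\a$ in the regime $\a\t(x)\lesssim 1$, the trivial linear bound otherwise, and the two regimes merged via the cut-off $\z$, with \eqref{zeta_prod:eq} absorbing the constant relating $\t(x)$ to $\lu x\ru^{\g}$. The paper's version is just terser, performing the same case split at $\a\t(x)=1/2$ and leaving the constant bookkeeping implicit.
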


 \begin{proof}
 Estimate the function $r^{(1)}_\a$:
\begin{equation*}
|r^{(1)}_\a(x)|\le
\begin{cases}
C \a^2|x|^{2\g}, \    \a \t(x)\le 1/2,\\[0.2cm]
C \a|x|^{\g }, \    \a \t(x)> 1/2,
\end{cases}
\end{equation*}
with a constant $C$ independent of $x$.
The second estimate is immediate, and the first one follows from the
Taylor's formula
\[
\frac{1}{\sqrt{1+t}} = 1 - \frac{t}{2} + O(t^2),\ 0\le t\le \frac{1}{2}.
\]
Thus
\[
|r^{(1)}_\a(x)|\le C\a|x|^{\g} \z\bigl(\a|x|^{\g}\bigr).
\]
This leads to the proclaimed estimate for $e^{(1)}_\a$.
 \end{proof}

\begin{lem}\label{e2:lem}
Let $\g\ge1$. Then for all $\a>0$ and any $\vark\in (0, 1]$,
\begin{equation*}
\| \BE_\a^{(2)}\|\le C_\vark\a.
\end{equation*}
\end{lem}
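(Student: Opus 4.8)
The plan is to bound $\|\BE_\a^{(2)}\| = \|\op(e_\a^{(2)})\|$ by verifying that the symbol $e_\a^{(2)}$ lies in a suitable H\"ormander-type symbol class uniformly in $\a$, so that a standard Calder\'on--Vaillancourt-style estimate gives a bound on the operator norm in terms of finitely many derivatives of the symbol. First I would obtain the pointwise bound on $r_\a^{(2)}$ itself: from its definition $r_\a^{(2)}(x,\xi) = \a g(x)^{-1}\int_0^{|\xi|g(x)}(1-e^{-\a t})\,dt$, and using $0\le 1-e^{-\a t}\le \min(1,\a t)$, one gets
\[
|r_\a^{(2)}(x,\xi)| \le C\,\a\, g(x)^{-1}\, |\xi| g(x)\,\min\bigl(1,\a|\xi|g(x)\bigr)^{\vark}\, \bigl(\cdots\bigr)^{1-\vark},
\]
which after interpolating the two bounds $1-e^{-\a t}\le 1$ and $\le \a t$ yields $|r_\a^{(2)}(x,\xi)| \le C\,\a\,|\xi|\,\z(\a|\xi|)^{\vark}\, g(x)^{\vark - 1 + \vark}\cdots$; the point of the normalising factors in \eqref{e2:eq} is precisely that they are chosen to absorb exactly this growth, so that $|e_\a^{(2)}(x,\xi)|\le C_\vark\,\a$. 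Here I would use \eqref{zeta_prod:eq} to split $\z(\a|\xi|g(x))$ as $\le 2\z(\a|\xi|)g(x)$ when $g(x)\ge 1$ (which always holds since $\t\ge 0$).

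Next I would estimate the $x$- and $\xi$-derivatives of $e_\a^{(2)}$. The $\xi$-derivatives are the easier ones: differentiating $r_\a^{(2)}$ in $\xi$ pulls out a factor $g(x)(1-e^{-\a|\xi|g(x)})$ with an extra $\sign\xi$, and each further $\xi$-derivative lands on $\a t$-type integrands producing at worst a loss of $\lu\xi\ru^{-1}$ compensated by the $\lu\xi\ru^{-1}$ weight already present; so $|\p_\xi^k e_\a^{(2)}|\le C_{k,\vark}\,\a\,\lu\xi\ru^{-k}$. The $x$-derivatives are where the hypothesis $\g\ge 1$ enters through \eqref{g_dash:eq}: every time $\p_x$ hits $g(x)$ or $e^{-\a|\xi|g(x)}$ it produces a factor $g'(x)$, bounded by $Cg(x)$, which is then reabsorbed by the $g_\a(x)^{-\vark}$ weight (and by the residual powers of $g$ in the estimate for $r_\a^{(2)}$). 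One subtlety is that $g$ is only $\plainC\infty$ away from $x=0$ and $\t(x)=2^{-1}\t(x,x)$ may be merely Lipschitz there; I would handle the corner at the origin by noting that $e_\a^{(2)}$ and its first $\xi$-derivatives, together with a first $x$-derivative in the distributional sense, are bounded, which is all a quantitative pseudo-differential norm estimate in one dimension actually requires.

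The main obstacle, and the step deserving the most care, is the uniformity in $\a$: a naive Calder\'on--Vaillancourt bound would control $\|\op(e_\a^{(2)})\|$ by $\sup|\p_x^{j}\p_\xi^{k} e_\a^{(2)}|$ over $j,k\le 2$, but one must check that the weights $g_\a(x)^{-\vark}$ and $\z(\a\lu\xi\ru)^{\vark}$ do not generate hidden $\a$-dependence through the constant in that theorem. I would circumvent this by working directly: either via the Schur test after writing $\op(e_\a^{(2)})$ as an oscillatory integral operator and integrating by parts in $\xi$ twice to gain decay $\lu x-y\ru^{-2}$ in the kernel (the $\xi$-integrability being furnished by the $\lu\xi\ru^{-1}$ weight plus the two integrations by parts, while the $x$-regularity of the symbol controls the $y$-integral), or by invoking the version of the Calder\'on--Vaillancourt theorem for the symbol class $S^0_{0,0}$ whose constant is explicitly $\sup_{j,k\le N}\|\p_x^j\p_\xi^k\, \cdot\,\|_{\plainL\infty}$ with a universal $N$; both routes reduce the claim to the derivative bounds above, each of which carries a clean factor $\a$. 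Collecting these estimates gives $\|\BE_\a^{(2)}\|\le C_\vark\,\a$ for all $\a>0$, as asserted. \qed
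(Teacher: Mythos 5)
Your proposal follows essentially the same route as the paper: bound $r_\a^{(2)}$ and its first $x$-, $\xi$- and mixed derivatives pointwise by $C\a$ times the normalising weights (using $1-e^{-\a t}\le\min(1,\a t)$, \eqref{zeta_prod:eq}, and \eqref{g_dash:eq}), then invoke an $S^0_{0,0}$-type $L^2$-boundedness theorem whose constant depends only on the derivative sup-norms. The uniformity worry you raise is moot, since the paper's Proposition \ref{boundedness:prop} (Cordes) already has a dimension-only constant and in $d=1$ requires only the derivatives $\p_x^n\p_\xi^m$ with $n,m\le 1$, which is exactly what the weight computation delivers.
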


\begin{proof}
To estimate the norm of $\op(e^{(2)}_\a))$ we use
Proposition \ref{boundedness:prop}.
It is clear that the distributional derivatives $\p_x, \p_\xi, \p_{x}\p_{\xi}$
of the symbol $e^{(2)}_\a(x, \xi)$ exist and are given by
%
\begin{align*}
\p_x r^{(2)}_\a(x, \xi)
= &\ -\frac{\a}{g^2} g'\int_0^{|\xi| g} (1- e^{-\a t}) dt
+ \frac{\a}{g}|\xi| g'(1-e^{-\a |\xi| g}),\\[0.2cm]
\p_\xi r^{(2)}_\a(x, \xi) = &\
\a \ \textup{sign}\  \xi (1-e^{-\a|\xi| g}),\\[0.2cm]
\p_x \p_{\xi} r^{(2)}_\a(x, \xi) = &\ \a^2 \xi g' e^{-\a |\xi| g},
\end{align*}
for all $x\not = 0, \xi\not = 0$.
For any $\vark\in (0, 1]$ the elementary bounds hold:
\begin{align*}
\int_0^{|\xi| g} (1- e^{-\a t}) dt \le &\ |\xi| g\z\bigl((\a|\xi| g)^\vark\bigr)
\le 2 |\xi| g^{1+\vark} \z\bigl((\a|\xi|)^\vark\bigr), \\
|1-e^{-\a|\xi|g}|
\le &\ \z\bigl((\a|\xi|g)^\vark\bigr)
\le 2 g^\vark\ \z\bigl((\a|\xi|)^\vark\bigr),\\
 \a |\xi| g e^{-\a|\xi|g} \le &\ \z\bigl((\a|\xi|g)^\vark\bigr)
 \le 2 g^\vark \z\bigl((\a|\xi|)^\vark\bigr).
\end{align*}
Here we have used \eqref{zeta_prod:eq}.
Thus, in view of \eqref{g_dash:eq},
\begin{equation*}
|r^{(2)}_\a(x, \xi)| + |\p_\xi r^{(2)}_\a(x, \xi)| + |\p_x r^{(2)}_\a(x, \xi) |
\le  C\a\lu\xi\ru  g^\vark\z\bigl((\a|\xi|)^\vark\bigr).
\end{equation*}
Also,
\begin{equation*}
|\p_x \p_\xi r_\a^{(2)}(x, \xi)|
\le \a \frac{|g'|}{g}   \bigl(\a |\xi| g e^{-\a|\xi|g} \bigr)
\le C \a |g|^\vark \z\bigl((\a|\xi|)^\vark\bigr).
\end{equation*}
Now estimate the derivatives of the weights:
\begin{align*}
|\p_x g^{-\vark}| = &\ \vark g^{-\vark-1} g'\le C g^{-\vark},\ x\not = 0,\\[0.2cm]
|\p_\xi \bigl(\lu \xi\ru \z\bigl((\a\lu \xi\ru)^\vark\bigr)\bigr)^{-1}|
\le &\ C\frac{1}{\lu\xi\ru^2 \z\bigl((\a\lu\xi\ru)^\vark\bigr)}, \xi\in\R.
\end{align*}
Thus the symbol $e^{(2)}_\a(x, \xi)$ as well as its derivatives
$\p_x, \p_\xi, \p_x \p_\xi$
are bounded by $C\a$ for all $\a >0$ uniformly in $x, \xi$.
Now the required estimate follows from Proposition \ref{boundedness:prop}.
\end{proof}

We make a useful observation:

\begin{cor}\label{g:cor}
Let $\g\ge 1$ and $\vark\in (0, 1]$. Then for any function $f\in D(\BA)$,
\begin{equation}\label{g1:eq}
\a^{-1}\|\BR^{(1)}_\a f\|\to  0,\ \a\to 0,\
\end{equation}
\begin{equation}\label{g2:eq}
\a^{-1}\| \BE^{(2)}_\a \lu D_x\ru \z\bigl((\a\lu D_x\ru)^\vark\bigr) f\|
\to 0, \a\to 0.
\end{equation}

\end{cor}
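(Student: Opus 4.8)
The plan is to read off both limits from the operator‑norm estimates already established in Lemmas~\ref{e1:lem} and~\ref{e2:lem}, each of which supplies one power of $\a$, and to produce the missing ``little‑$o$'' by a dominated‑convergence argument that exploits the vanishing of $\z$ at the origin together with the membership $f\in D(\BA)=\plainH1(\R)\cap\plainL2(\R,|x|^{2\g})$.

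For \eqref{g1:eq} I would first note that, since $r^{(1)}_\a$ depends on $x$ alone, $\BR^{(1)}_\a$ is simply multiplication by the function $r^{(1)}_\a(x)$. Rearranging the definition \eqref{e1:eq} and invoking Lemma~\ref{e1:lem} gives the pointwise bound $|r^{(1)}_\a(x)|\le C\a\,\lu x\ru^{\g}\,\z(\a\lu x\ru^{\g})$. Squaring, multiplying by $|f(x)|^2$, integrating and dividing by $\a^2$, one is left with showing that $\int\lu x\ru^{2\g}\,\z(\a\lu x\ru^{\g})^2\,|f(x)|^2\,dx\to0$ as $\a\to0$. Here $0\le\z\le2$, so the integrand is dominated by $4\lu x\ru^{2\g}|f(x)|^2\in\plainL1(\R)$ (this is exactly where $f\in\plainL2(\R,|x|^{2\g})$ is used), while for each fixed $x$ one has $\a\lu x\ru^{\g}\to0$, hence $\z(\a\lu x\ru^{\g})\to\z(0)=0$; the Dominated Convergence Theorem then finishes it.

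For \eqref{g2:eq} the key observation is that $N_\a:=\lu D_x\ru\z((\a\lu D_x\ru)^\vark)$ is merely a Fourier multiplier, whose symbol $\lu\xi\ru\z((\a\lu\xi\ru)^\vark)$ is bounded by $2\lu\xi\ru$; in particular $N_\a$ maps $\plainH1(\R)\supset D(\BA)$ boundedly into $\plainL2(\R)$, so $N_\a f$ is well defined. By Lemma~\ref{e2:lem}, $\a^{-1}\|\BE^{(2)}_\a N_\a f\|\le\a^{-1}\|\BE^{(2)}_\a\|\,\|N_\a f\|\le C_\vark\|N_\a f\|$, so it suffices to prove $\|N_\a f\|\to0$. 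Since the Fourier transform is unitary on $\plainL2(\R)$, $\|N_\a f\|^2=\int\lu\xi\ru^2\,\z((\a\lu\xi\ru)^\vark)^2\,|\hat f(\xi)|^2\,d\xi$; the integrand is dominated by $4\lu\xi\ru^2|\hat f(\xi)|^2\in\plainL1(\R)$ because $f\in\plainH1(\R)$, and it tends to $0$ pointwise, since $(\a\lu\xi\ru)^\vark\to0$ forces $\z((\a\lu\xi\ru)^\vark)\to\z(0)=0$; Dominated Convergence again yields the claim.

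I do not expect a genuine obstacle here: the real work has already been done in Lemmas~\ref{e1:lem}–\ref{e2:lem}, whose purpose is precisely to extract one factor of $\a$ from the error symbols using the pseudo‑differential norm bounds, and the corollary just combines this with the elementary fact that the $\z$‑weights in \eqref{e1:eq} and in $N_\a$ shrink to $0$ on any fixed spatial or frequency scale as $\a\to0$. The only points needing a little care are the structural ones---recognising that $\BR^{(1)}_\a$ is a multiplication operator and that $\lu D_x\ru\z((\a\lu D_x\ru)^\vark)$ is a Fourier multiplier, so that the reduction to scalar integrals is legitimate---and checking that the dominating functions $\lu x\ru^{2\g}|f|^2$ and $\lu\xi\ru^2|\hat f|^2$ are integrable, which is exactly what $f\in D(\BA)$ provides.
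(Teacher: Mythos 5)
Your proposal is correct and follows essentially the same route as the paper's own proof: in both cases the factor of $\a$ is extracted from the operator-norm bounds of Lemmas~\ref{e1:lem} and~\ref{e2:lem} (noting that $\BR^{(1)}_\a=\BE^{(1)}_\a\,\lu x\ru^{\g}\z(\a\lu x\ru^{\g})$ is a multiplication operator and that $\lu D_x\ru\z((\a\lu D_x\ru)^\vark)$ is a Fourier multiplier), and the remaining factor is shown to vanish by dominated convergence using $f\in D(\BA)=\plainH1(\R)\cap\plainL2(\R,|x|^{2\g})$. No gaps.
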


\begin{proof}
Rewrite:
\begin{equation}\label{r1:eq}
\|\BR^{(1)}_\a f\| = \|\BE^{(1)}_\a   \lu x\ru^{\g } \z(\a\lu x\ru^{\g})f\|
\le \|\BE^{(1)}_\a \|\   \|\lu x\ru^{\g} \z(\a\lu x\ru^{\g})f\|.
\end{equation}
By Lemma \ref{e1:lem} the norm of $\BE^{(1)}_\a$ on the right-hand side
is bounded by $C\a$. The function $\lu x\ru^{\g} \z(\a\lu x\ru^{\g})f$
tends to zero as $\a\to 0$ a.a. $x\in\R$, and it is uniformly bounded by
the function $\lu x\ru^{\g}|f|$, which belongs to $\plainL2$, since
$f\in D(\BA)$. Thus the second factor in \eqref{r1:eq} tends to zero as $\a\to 0$
by the Dominated Convergence Theorem. This proves \eqref{g1:eq}.

Proof of \eqref{g2:eq}. Estimate:
\begin{equation*}
\| \BE^{(2)}_\a \lu D_x\ru\z\bigl((\a\lu D_x\ru)^\vark\bigr) f\|
\le \| \BE^{(2)}_\a \|\ \|\lu \xi\ru \z\bigl((\a\lu \xi\ru)^\vark\bigr) \hat f\|.
\end{equation*}
By Lemma \ref{e2:lem} the norm of the first factor on
the right-hand side is bounded by $C\a$. The second factor tends to zero as
$\a\to 0$ for the same reason as  in the proof of \eqref{g1:eq}.
\end{proof}

\section{Norm-convergence of the extremal eigenfunction}

Recall that the maximal positive eigenvalue $\mu_\a$
of the operator $\BB_\a$ is non-degenerate, and the corresponding (normalized) eigenfunction
$\psi_{\a}$ is positive a.a. $x\in\R$.

The principal goal of this section is to prove that any infinite subset of the family
$\psi_\a$, $\a \le 1$ contains a norm-convergent sequence.
We begin with an upper bound for $1-\mu_\a$ which will be crucial for our argument.

\begin{lem}\label{upper:lem}
If $\g \ge 1$, then
\begin{equation}\label{upper:eq}
\limsup_{\a\to 0} \a^{-1}(1-\mu_\a) \le \l_1.
\end{equation}
\end{lem}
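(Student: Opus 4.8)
The plan is to produce, for each small $\a>0$, a trial function $f_\a\in D(\BA)$ with $\|f_\a\|=1$ such that $\langle \BB_\a f_\a, f_\a\rangle \ge 1 - \a\l_1 + o(\a)$. Since $\mu_\a = \|\BB_\a\|\ge \langle \BB_\a f_\a, f_\a\rangle$, this gives $1-\mu_\a \le \a\l_1 + o(\a)$, which is \eqref{upper:eq}. The natural candidate is the lowest eigenfunction $\phi_1$ of $\BA$ (which lies in $D(\BA)$), possibly smoothed: since $\phi_1\in D(\BA)=\plainH1\cap\plainL2(\R,|x|^{2\g})$ one can, if needed, approximate it by a Schwartz function $f$ with $\|(\BA-\l_1)f\|$ small and $\|f-\phi_1\|$ small, so WLOG I take the trial function to be a fixed nice $f$ with $\BA f$ close to $\l_1 f$, and at the end let $f\to\phi_1$.

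The key reduction is to pass from $\BB_\a$ to $\BB^{(l)}_\a$ using \eqref{k-l:eq}: $\|\BB_\a - \BB^{(l)}_\a\|\le C\a^{1+1/\g} = o(\a)$, so it suffices to bound $\langle \BB^{(l)}_\a f, f\rangle$ from below. Now write $\BB^{(l)}_\a = \op(b^{(l)}_\a) = I - \a\BA + \BR_\a$ with $\BR_\a = \BR^{(1)}_\a + \BR^{(2)}_\a$ as in Section 3. Then
\begin{equation*}
\langle \BB^{(l)}_\a f, f\rangle = \|f\|^2 - \a\langle \BA f, f\rangle + \langle \BR^{(1)}_\a f, f\rangle + \langle \BR^{(2)}_\a f, f\rangle,
\end{equation*}
and I must show the two remainder terms are $o(\a)$. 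For the first, $|\langle \BR^{(1)}_\a f, f\rangle| \le \|\BR^{(1)}_\a f\|\,\|f\| = o(\a)$ directly by \eqref{g1:eq} of Corollary \ref{g:cor}. For the second, factor through the weight as in \eqref{e2:eq}: writing $\BR^{(2)}_\a = \BE^{(2)}_\a \cdot \lu D_x\ru \z\bigl((\a\lu D_x\ru)^\vark\bigr)\cdot g_\a(x)^{\vark}$ — more precisely, pairing $\BR^{(2)}_\a f$ against $f$ and moving the bounded-below weight $g_\a(x)^{\vark}$ (which is $\ge 1$ but comparable to $\lu x\ru^{\vark\g}$ on the support, hence fine when applied to $f\in D(\BA)$) — we get $|\langle \BR^{(2)}_\a f, f\rangle|\le \|\BE^{(2)}_\a \lu D_x\ru \z((\a\lu D_x\ru)^\vark) f\|\cdot \|g_\a^\vark f\| = o(\a)$ by \eqref{g2:eq}, provided $\vark$ is chosen so that $\lu x\ru^{\vark\g}f\in\plainL2$, e.g. any $\vark\in(0,1]$ works since $f$ is Schwartz (and for $f=\phi_1$ one needs $\vark\g\le 2\g$, automatic). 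Combining, $\langle \BB_\a f, f\rangle = 1 - \a\langle \BA f, f\rangle + o(\a)$, so $\limsup_{\a\to0}\a^{-1}(1-\mu_\a)\le \langle\BA f,f\rangle$; letting $f\to\phi_1$ gives $\langle\BA\phi_1,\phi_1\rangle = \l_1$.

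The main obstacle I anticipate is the bookkeeping with the pseudodifferential symbols and, in particular, justifying that $\BR^{(2)}_\a$ genuinely factors through the weight $g_\a^\vark$ applied on the \emph{right} to $f$ (rather than the left), i.e. that the operator $\op(e^{(2)}_\a)\cdot\lu D_x\ru\z((\a\lu D_x\ru)^\vark)$ acting on $g_\a^\vark f$ reproduces $\BR^{(2)}_\a f$; this is a matter of commuting a function of $x$ past $\op(e^{(2)}_\a)$ and tracking the resulting commutator, which should be lower order by the derivative bounds established in Lemma \ref{e2:lem}, but requires care. A second, milder point is the approximation step $f\to\phi_1$: one needs that the functionals $f\mapsto \langle\BA f,f\rangle$ and the $o(\a)$ estimates are stable under $\plainH1\cap\plainL2(\R,|x|^{2\g})$-approximation, which follows from density of $\co$ in $D(\BA)$ in the graph norm and continuity of $\BA$ as a map $D(\BA)\to\plainL2$.
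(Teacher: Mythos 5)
Your proposal is correct and follows essentially the same route as the paper: test $\BB_\a$ against $\phi_1$, pass to $\BB^{(l)}_\a$ via \eqref{k-l:eq}, and control $(\BR_\a\phi_1,\phi_1)$ through the weighted decomposition \eqref{e1:eq}--\eqref{e2:eq} and Corollary \ref{g:cor}. The commutator issue you anticipate does not arise: in the standard quantization the $x$-weight $g_\a^\vark$ factors out exactly on the \emph{left}, so in the quadratic form it simply moves onto the other copy of $\phi_1$, yielding $|(\BR^{(2)}_\a\phi_1,\phi_1)|\le\|\BE^{(2)}_\a\lu D_x\ru\z\bigl((\a\lu D_x\ru)^\vark\bigr)\phi_1\|\,\|g_\a^\vark\phi_1\|$ directly, with no commutator and no need for the smoothing/approximation step (the paper uses $\phi_1\in D(\BA)$ as is).
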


\begin{proof}
Denote $\phi:=\phi_1$.
By a straightforward variational argument it follows that
\begin{align*}
\mu_\a\ge &\ (\BB_\a \phi, \phi)
\ge |(\BB^{(l)}_\a\phi, \phi)| - \|\BB_\a-\BB^{(l)}_\a\|\\[0.2cm]
\ge &\ ((I-\a \BA)\phi, \phi) - |(\BR_\a\phi, \phi)| + o(\a)\\
= &\ 1-\a \l_1 - |(\BR_\a\phi, \phi)| + o(\a),
\end{align*}
where we have also used \eqref{k-l:eq}.
By definitions \eqref{e1:eq} and \eqref{e2:eq},
\begin{equation*}
|\bigl(\BR_\a\phi, \phi\bigr)|
\le \| \BR_\a^{(1)} \phi\| +
\| \BE_\a^{(2)}\lu D_x\ru\z\bigl((\a\lu D_x\ru)^\vark\bigr)\phi\| \ \| g_\a^\vark\phi\|,
\end{equation*}
where $\vark\in (0, 1]$.
It is clear that $g_\a^\vark \phi\in\plainL2$ and its norm is bounded
uniformly in $\a\le1$. The remaining terms on the right-hand side
are of order $o(\a)$ due to Corollary
\ref{g:cor}. This leads to \eqref{upper:eq}.
\end{proof}

The established upper bound leads to the following property.

\begin{lem}\label{psi:lem}
For any $\vark\in (0, 1)$,
\begin{equation*}
\| g_\a^\vark \psi_\a\|\le C
\end{equation*}
uniformly in $\a\le 1$.
\end{lem}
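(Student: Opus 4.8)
The plan is to test the eigenvalue equation $\BB_\a\psi_\a = \mu_\a\psi_\a$ against $\psi_\a$ and extract from it a lower bound on the "weighted norm" $\|g_\a^\vark\psi_\a\|$ in terms of $1-\mu_\a$; since $1-\mu_\a = O(\a)$ by Lemma \ref{upper:lem}, the desired uniform bound will follow. First I would pass from $\BB_\a$ to $\BB^{(l)}_\a$ using \eqref{k-l:eq}, writing
\begin{equation*}
\mu_\a = (\BB_\a\psi_\a, \psi_\a) \le (\BB^{(l)}_\a\psi_\a, \psi_\a) + C\a^{1+\frac 1\g},
\end{equation*}
so that $1 - (\BB^{(l)}_\a\psi_\a, \psi_\a) \le 1-\mu_\a + C\a^{1+\frac 1\g} = O(\a)$. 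Next I would use the pseudodifferential representation $\BB^{(l)}_\a = \op(b^{(l)}_\a)$ together with the splitting $b^{(l)}_\a = 1 - \a a + r^{(1)}_\a + r^{(2)}_\a$, giving
\begin{equation*}
1 - (\BB^{(l)}_\a\psi_\a,\psi_\a) = \a(\BA\psi_\a,\psi_\a) - (\BR^{(1)}_\a\psi_\a,\psi_\a) - (\BR^{(2)}_\a\psi_\a,\psi_\a).
\end{equation*}
The key point is that $\BA \ge 2^{-1}\t(x) = 2^{-1}\lu x\ru^{-\text{(lower order)}}$-type lower bound; more precisely $(\BA\psi_\a,\psi_\a) \ge 2^{-1}(\t\psi_\a,\psi_\a)$, and since $g_\a(x)^2 = 1+\a\t(x)$ one has $g_\a^{2}\psi_\a$ controlled by $\psi_\a$ plus $\a\,\t\psi_\a$. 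The upshot I am aiming for is an inequality of the shape $\a\,(\t\psi_\a,\psi_\a) \le C\a$, i.e. $\|\t^{1/2}\psi_\a\|^2 \le C$, which since $\vark < 1$ gives $\|g_\a^\vark\psi_\a\|^2 \le \|\psi_\a\|^2 + \a\|\t^{1/2}\psi_\a\|^2 \le C$ uniformly.

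The step that needs care is controlling the error terms $(\BR^{(1)}_\a\psi_\a,\psi_\a)$ and $(\BR^{(2)}_\a\psi_\a,\psi_\a)$, because here I no longer know a priori that $\psi_\a \in D(\BA)$ with uniformly bounded graph norm — that is precisely what I am trying to prove — so I cannot simply invoke Corollary \ref{g:cor}. Instead I would exploit the weights built into \eqref{e1:eq}–\eqref{e2:eq}: write $(\BR^{(1)}_\a\psi_\a,\psi_\a) = (\BE^{(1)}_\a \lu x\ru^\g\z(\a\lu x\ru^\g)\psi_\a, \psi_\a)$ and bound it by $\|\BE^{(1)}_\a\|\,\|\lu x\ru^\g\z(\a\lu x\ru^\g)\psi_\a\|\,\|\psi_\a\|$. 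The crucial estimate is $\lu x\ru^\g\,\z(\a\lu x\ru^\g) \le C\,\z(\a\lu x\ru^\g)^{1-\vark}\,(\lu x\ru^\g)^{1-?}\cdots$; the clean way is to note $\z(s) \le C s^{\vark}$ for all $s\ge 0$ when $\vark\in(0,1]$ (since $\z$ is bounded and behaves linearly near $0$), whence $\lu x\ru^\g\z(\a\lu x\ru^\g) \le C\a^{\vark}\lu x\ru^{\g(1+\vark)} \le C\a^{\vark}g_\a(x)^{2(1+\vark)}\cdots$ — but that raises the weight power above $2$. The right bookkeeping is to choose the exponent so that only $g_\a^{2\vark}$ appears: using $\z(\a\lu x\ru^\g)\le \z(\a\lu x\ru^\g)^{\vark}\cdot C$ and $\lu x\ru^\g \le C\a^{-1}g_\a^2$ trades one power of $\lu x\ru^\g$ for $\a^{-1}g_\a^2$, and $\a\lu x\ru^\g \le g_\a^2$ controls the remaining $\z$-factor, so altogether $\lu x\ru^\g\z(\a\lu x\ru^\g)\le C\a^{-1}g_\a^{2}\cdot\z(g_\a^{2})$ which is too crude; the correct and simplest route is to bound $r^{(1)}_\a$ directly by the proof of Lemma \ref{e1:lem}, namely $|r^{(1)}_\a(x)| \le C\a\,|x|^\g\z(\a|x|^\g) \le C\a\cdot\a^{-1}(g_\a^2-1) = C(g_\a^2 - 1) \le C g_\a^2$, hmm — let me instead phrase it as: $|r^{(1)}_\a(x)|\le C\a|x|^\g\z(\a|x|^\g)\le C\a\cdot a(x,0)\cdot\z(\a|x|^\g)^{\vark}\cdot C\le C\a\, g_\a^{2\vark}\,$(bounded).

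So the honest statement of the main obstacle is: arrange the weight exponents so that $|(\BR_\a\psi_\a,\psi_\a)| \le C\a\,\|g_\a^{\vark}\psi_\a\|\,\|g_\a^{\vark'}\psi_\a\|$ with $\vark+\vark' < 2$ (or even $\le C\a\,\|g_\a^{\vark}\psi_\a\|^2$ after Cauchy–Schwarz), and similarly for $\BR^{(2)}_\a$ using the bound $|r^{(2)}_\a(x,\xi)| \le C\a\lu\xi\ru g_\a^\vark\z((\a|\xi|)^\vark)$ from the proof of Lemma \ref{e2:lem} combined with an argument that $\|\lu D_x\ru\z((\a\lu D_x\ru)^\vark)\psi_\a\|$ is itself controlled — which in turn requires a preliminary observation that $\a\|\lu D_x\ru\psi_\a\|^2 = \a\|\,|D_x|^{1/2}\psi_\a\|^2 \le (\BA\psi_\a,\psi_\a) + \|\psi_\a\|^2$ stays bounded because $(\BA\psi_\a,\psi_\a)\le C$ (which follows from the same expansion once the error terms are dominated). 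Collecting: from the chain $O(\a) \ge \a(\BA\psi_\a,\psi_\a) - C\a\big(1 + \|g_\a^\vark\psi_\a\|^2\big)$ together with $(\BA\psi_\a,\psi_\a)\ge 2^{-1}\|\t^{1/2}\psi_\a\|^2 \ge c\,\|g_\a^\vark\psi_\a\|^2 - C$ for $\vark<1$, one gets $c\,\|g_\a^\vark\psi_\a\|^2 \le C + C\|g_\a^\vark\psi_\a\|^2\cdot o(1)$, which for small $\a$ absorbs the right-hand term and yields $\|g_\a^\vark\psi_\a\|^2 \le C$ uniformly in $\a\le 1$ — the claimed bound. The genuinely delicate part, and where I would spend the care, is the bootstrap structure: establishing boundedness of $(\BA\psi_\a,\psi_\a)$ and of $\|\lu D_x\ru^{1/2}\psi_\a\|^2$ simultaneously with the weighted-norm bound, since all three appear on both sides and one must check the $o(1)$ coefficients really do allow absorption for all $\a\le 1$ (not merely $\a$ small), e.g. by first proving the statement for $\a \le \a_0$ and then noting compactness/continuity handles $\a_0 \le \a \le 1$ trivially since $g_\a^\vark\le g_1^\vark$ there and $\psi_\a$ is a fixed $\plainL2$ function for each such $\a$ — wait, that last point is not automatic either, but for $\a$ in a compact interval bounded away from $0$ the kernel $B_\a$ decays and a crude Schur-type argument bounds $\|g_\a^\vark\psi_\a\|$ for each $\a$, with uniformity following from continuity of $\a\mapsto\psi_\a$.
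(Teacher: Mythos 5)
Your approach---testing the eigenvalue equation as a quadratic form, expanding $\BB^{(l)}_\a = I-\a\BA+\BR_\a$, and closing a bootstrap by absorbing the error terms into the main term $\a(\BA\psi_\a,\psi_\a)$---has a genuine gap, and it sits exactly where the lemma is nontrivial, namely on the set where $\a\t(x)\gtrsim 1$ (equivalently $|x|\gtrsim\a^{-1/\g}$), which is the only region where the weight $g_\a^\vark$ is unbounded. On that set the remainder $r^{(1)}_\a(x)=g_\a(x)^{-1}-1+\tfrac{\a}{2}\t(x)$ is not a small perturbation of the main term $\tfrac{\a}{2}\t(x)$: it is comparable to it and in fact cancels it, since
\begin{equation*}
\tfrac{\a}{2}\t(x)-r^{(1)}_\a(x)=1-g_\a(x)^{-1}\le 1 .
\end{equation*}
Consequently the inequality you aim for, $c\,\|g_\a^\vark\psi_\a\|^2\le C+o(1)\,\|g_\a^\vark\psi_\a\|^2$, cannot be reached: the best available bound on $(\BR^{(1)}_\a\psi_\a,\psi_\a)$ in that region is $C\a\|\t^{1/2}\psi_\a\|^2$ with a constant that is not small, so nothing can be absorbed. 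More structurally, the symbol $1-b^{(l)}_\a$ is bounded by $1$, so the form identity $1-\mu_\a=O(\a)$ can only tell you that $\int(1-g_\a^{-1})|\psi_\a|^2=O(\a)$, i.e.\ an unweighted $\plainL2$ smallness of $\psi_\a$ on $\{\a\t\ge 1\}$; it contains no information whatsoever about $\int_{\{\a\t\ge1\}}(\a\t)^\vark|\psi_\a|^2$, which is the quantity you must control. The expansion $b^{(l)}_\a\approx 1-\a a$ is useful only against \emph{fixed} $f\in D(\BA)$ (that is what Corollary \ref{g:cor} delivers, via dominated convergence); you correctly note you cannot invoke it for $\psi_\a$ itself, but the proposed bootstrap does not repair this.

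The decay of $\psi_\a$ needed here comes from a different mechanism, which your plan never touches: the kernel $B_\a(x,y)$ itself decays in $x$ because of the term $\a^3\T(x,y)$ in its denominator. The paper uses the eigenvalue equation as an operator identity, $g_\a^\vark\psi_\a=\mu_\a^{-1}g_\a^\vark\BB_\a\psi_\a$, and checks that the weighted kernel $g_\a(x)^\vark B_\a(x,y)$ is uniformly bounded in the Schur sense: since $\T(x,y)\ge c\,\t(x)$, one has $\a^2+(x-y)^2+\a^3\T(x,y)\ge c\bigl(\a^2 g_\a(x)^2+(x-y)^2\bigr)$, whence
\begin{equation*}
g_\a(x)^\vark B_\a(x,y)\le \frac{C}{\pi\a}\,\frac{1}{\bigl(1+\a^{-2}(x-y)^2\bigr)^{1-\vark/2}},
\end{equation*}
and for $\vark<1$ this kernel has Schur norm $O(1)$ uniformly in $\a$; the bound then follows from $\|\psi_\a\|=1$ and $\mu_\a$ being bounded away from zero (Lemma \ref{upper:lem}). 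This is where the hypothesis $\vark<1$ actually enters (integrability of $(1+t^2)^{-(1-\vark/2)}$), whereas in your argument its role is never made clear. I recommend abandoning the quadratic-form route for this lemma and arguing directly from the kernel.
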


\begin{proof} By definition of $\psi_\a$,
\[
g_\a^\vark \psi_\a = \mu_\a^{-1} g_\a^\vark\BB_\a \psi_\a.
\]
In view of \eqref{Theta1:eq}, by definition \eqref{galpha:eq}
we have
$\T(x, y)\ge C|x|^\g\ge c \t(x)$, so that
the kernel $B_\a(x, y)$ is bounded from above by
\begin{equation*}
B_\a(x, y)\le \frac{\a}{\pi} \frac{C}{(x-y)^2+ \a^2 g_\a(x)^2},
\end{equation*}
and thus the kernel $\tilde B_\a(x, y) = g_\a(x)^\vark B_\a(x, y)$
satisfies the estimate
\[
\tilde B_\a(x, y)\le \frac{C}{\pi\a}
\frac{1}{\bigl(1+\a^{-2}(x-y)^2\bigr)^{1-\frac{\vark}{2}}}.
\]
Since $\vark < 1$, by Proposition \ref{schur:prop}
this kernel defines a bounded operator with
the norm uniformly bounded in $\a>0$. Thus
\[
\|g_\a^\vark \psi_\a\|\le C\mu_\a^{-1} \|\psi_\a\| \le C\mu_\a^{-1}.
\]
It remains to observe that by Lemma \ref{upper:lem} the eigenvalue
$\mu_\a$ is separated  from zero uniformly in $\a\le 1$.
\end{proof}

Now we obtain more delicate estimates for $\psi_\a$.
For a number $h\ge 0$ introduce the function
\begin{equation}\label{sh:eq}
S_\a(t; h) = \frac{\a}{\pi}\frac{1}{\a^2 + t^2 + h}, t\in\R,
\end{equation}
and denote by $\BS_\a(h)$ the integral operator with the kernel $\BS_\a(x-y; h)$. Along with
$\BS_\a(h)$ we also consider the operator
\[
\BT_\a(h) = \BS_\a(0) - \BS_\a(h).
\]
Due to \eqref{m:eq} the
Fourier transform of $S_\a(t; h)$ is
\begin{equation}\label{hats:eq}
\hat S_\a(\xi; h) = \frac{\a}{\sqrt{2\pi}\sqrt{\a^2+h}} e^{-|\xi|\sqrt{\a^2+h}},
\xi\in\R,
\end{equation}
 so that
 \begin{equation}\label{st_norm:eq}
 \|\BS_\a(h)\| = \frac{\a}{\sqrt{\a^2+h}},\ \
 \|\BT_\a(h)\| =  1 - \frac{\a}{\sqrt{\a^2+h}}.
 \end{equation}
Denote by $\chi_R$ the characteristic function of the interval $(-R, R)$.

\begin{lem} For sufficiently small $\a>0$ and $\a R\le 1$,
\begin{equation}\label{fourier_below:eq}
\| \hat\psi_\a \chi_R\|^2\ge 1 - \frac{4\l_1}{R}.
\end{equation}
\end{lem}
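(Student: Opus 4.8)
The plan is to reduce \eqref{fourier_below:eq} to a high-frequency smallness estimate for $\hat\psi_\a$. Since $\|\psi_\a\|=1$, Parseval's identity gives $\|\hat\psi_\a\chi_R\|^2 = 1-\|\hat\psi_\a(1-\chi_R)\|^2$, so it suffices to prove $\|\hat\psi_\a(1-\chi_R)\|^2\le 4\l_1/R$. The underlying idea is that the spectral deficiency $1-\mu_\a$ controls precisely the mass of $\hat\psi_\a$ carried by the frequencies $|\xi|\ge R$, and that $1-\mu_\a$ is of order $\a$ by Lemma \ref{upper:lem}.

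First I would compare $\BB_\a$ with the convolution operator $\BS_\a(0)$ introduced in \eqref{sh:eq}, whose Fourier multiplier is $e^{-\a|\xi|}$ by \eqref{hats:eq}. Since $\T\ge 0$, the kernels obey $0\le B_\a(x,y)\le S_\a(x-y;0)$ in view of \eqref{B:eq}; because $\psi_\a\ge 0$ a.e., this yields the quadratic-form bound $\mu_\a = (\BB_\a\psi_\a,\psi_\a)\le(\BS_\a(0)\psi_\a,\psi_\a)$. Hence, using Parseval again,
\[
1-\mu_\a\ \ge\ \bigl((I-\BS_\a(0))\psi_\a,\psi_\a\bigr)\ =\ \int_\R\bigl(1-e^{-\a|\xi|}\bigr)|\hat\psi_\a(\xi)|^2\,d\xi\ \ge\ \int_{|\xi|\ge R}\bigl(1-e^{-\a|\xi|}\bigr)|\hat\psi_\a(\xi)|^2\,d\xi .
\]
On the range $|\xi|\ge R$ with $\a R\le 1$ one has $\a|\xi|\ge\a R$, so the elementary inequality $1-e^{-t}\ge\frac12\min(t,1)$ (valid for all $t\ge 0$) gives $1-e^{-\a|\xi|}\ge\frac12\a R$ there. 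Consequently $1-\mu_\a\ge\frac12\a R\,\|\hat\psi_\a(1-\chi_R)\|^2$, i.e.
\[
\|\hat\psi_\a(1-\chi_R)\|^2\ \le\ \frac{2(1-\mu_\a)}{\a R}.
\]

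It remains to invoke Lemma \ref{upper:lem}: since $\limsup_{\a\to 0}\a^{-1}(1-\mu_\a)\le\l_1$, for all sufficiently small $\a>0$ we have $1-\mu_\a\le 2\l_1\a$, and substituting this into the last display gives $\|\hat\psi_\a(1-\chi_R)\|^2\le 4\l_1/R$, which is \eqref{fourier_below:eq}. I do not expect a genuine obstacle here; the only points requiring care are the kernel-domination step, which relies on both $\T\ge 0$ and the positivity of $\psi_\a$ to pass from pointwise kernel domination to the quadratic-form inequality, and the transition from the $\limsup$ in Lemma \ref{upper:lem} to the effective bound $1-\mu_\a\le 2\l_1\a$ for small $\a$ — it is exactly this factor $2$ that produces the constant $4\l_1$ (rather than $2\l_1$) in the statement.
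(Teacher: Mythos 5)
Your proof is correct and follows essentially the same route as the paper: kernel domination $B_\a(x,y)\le S_\a(x-y;0)$ plus positivity of $\psi_\a$, Parseval with the multiplier $e^{-\a|\xi|}$, and the bound $1-\mu_\a\le 2\l_1\a$ from Lemma \ref{upper:lem}. The only (immaterial) difference is the final elementary step — you use $1-e^{-t}\ge\tfrac12\min(t,1)$ on $|\xi|\ge R$, whereas the paper bounds $e^{-\a|\xi|}\le e^{-\a R}$ and then uses $e^{-s}\le(1+s)^{-1}$ — and both yield the same constant $4\l_1$.
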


\begin{proof} Since $B_\a(x, y) < S_\a(x-y; 0)$ (see \eqref{B:eq} and \eqref{sh:eq}) and
$\psi_\a\ge 0$, we can write, using \eqref{hats:eq}:
\begin{align*}
\mu_\a  = &\ (\BB_{\a}\psi_\a, \psi_\a)< \int_\R\int_\R S_\a(x-y; 0)\psi_\a(x)\psi_\a(y) dxdy
= \int_{\R} e^{-\a|\xi|} |\hat\psi_\a(\xi)|^2 d\xi\\[0.2cm]
\le &\ \int_{|\xi|\le R} |\hat\psi_\a(\xi)|^2 d\xi +
e^{-\a R} \int_{|\xi|> R}|\hat\psi_\a(\xi)|^2 d\xi\\[0.2cm]
= &\ (1-e^{-\a R}) \int_{|\xi|\le R} |\hat\psi_\a(\xi)|^2 d\xi + e^{-\a R}.
\end{align*}
Due to \eqref{upper:eq}, $\mu_\a\ge 1-2\a\l_1$ for sufficiently small $\a$, so
\begin{equation*}
1-e^{-\a R} - 2\a\l_1\le (1-e^{-\a R}) \|\hat\psi_\a \chi_R\|^2,
\end{equation*}
which implies that
\begin{equation*}
\|\hat\psi_\a \chi_R\|^2\ge 1 - \frac{2\a\l_1}{1-e^{-\a R}}.
\end{equation*}
Since $e^{-s}\le (1+s)^{-1}$ for all $s\ge 0$, we get
$(1-e^{-s})^{-1}\le 2 s^{-1}$ for $0< s\le 1$, which entails
\eqref{fourier_below:eq} for $\a R\le 1$.
\end{proof}

\begin{lem}
For sufficiently small $\a>0$ and any $R>0$,
\begin{equation}\label{below:eq}
\|\psi_\a \chi_R\|
\ge 1 - 4\a \l_1 -  \frac{C} {R^{\g }},
\end{equation}
with some constant $C>0$ independent of $\a$ and $R$.
\end{lem}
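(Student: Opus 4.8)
Write $\chi_R^\perp=1-\chi_R$ for the cut-off onto $\{|x|\ge R\}$. Since $\|\psi_\a\|=1$ and $\|\psi_\a\chi_R\|=\sqrt{1-\|\psi_\a\chi_R^\perp\|^2}\ge 1-\|\psi_\a\chi_R^\perp\|^2$, it is enough to prove a bound of the form $\|\psi_\a\chi_R^\perp\|^2\le C_1\a+C_2R^{-\g}$. The starting point is the form identity $(\BS_\a(0)\psi_\a,\psi_\a)-\mu_\a=\bigl((\BS_\a(0)-\BB_\a)\psi_\a,\psi_\a\bigr)$. Because $B_\a(x,y)<S_\a(x-y;0)$ and $\psi_\a\ge0$, the left-hand side is $\le\|\BS_\a(0)\|\,\|\psi_\a\|^2-\mu_\a=1-\mu_\a$, and $1-\mu_\a\le 2\a\l_1$ for small $\a$ by \eqref{upper:eq} (exactly as in the proof of \eqref{fourier_below:eq}). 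On the other hand the kernel of $\BS_\a(0)-\BB_\a$ equals $T_\a\bigl(x-y;\a^3\T(x,y)\bigr)\ge0$, where I abbreviate $T_\a(t;h)=S_\a(t;0)-S_\a(t;h)$, so that $\BT_\a(h)=\BS_\a(0)-\BS_\a(h)$ is convolution by $T_\a(\,\cdot\,;h)$. By homogeneity and \eqref{Theta1:eq}, $\T(x,y)\ge c|x|^\g$, hence on $\{|x|>R\}$ one has $\a^3\T(x,y)\ge h_R:=c\a^3R^\g$; since $h\mapsto T_\a(t;h)$ is increasing, restricting the nonnegative double integral to $\{|x|>R\}\times\R$ and then replacing $\a^3\T(x,y)$ by the smaller $h_R$ gives
\[
2\a\l_1\ \ge\ \int_{|x|>R}\psi_\a(x)\,(\BT_\a(h_R)\psi_\a)(x)\,dx .
\]

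\textbf{The key ingredient} is a Harnack-type regularity bound for the eigenfunction (taken in its continuous representative $\mu_\a^{-1}\BB_\a\psi_\a$): there is $c_0=c_0(\g)>0$, independent of $\a\le1$ and of $x$, with $\psi_\a(y)\ge c_0\,\psi_\a(x)$ whenever $|x-y|\le\a$. This follows from $\psi_\a=\mu_\a^{-1}\BB_\a\psi_\a$ together with the elementary pointwise comparison $B_\a(y,z)\ge c_0B_\a(x,z)$ for all $z$ once $|x-y|\le\a$: indeed $(y-z)^2\le 2(x-z)^2+2\a^2$, while $\T(y,z)\le C\,\T(x,z)$ as long as $\max(|x|,|z|)\gtrsim\a$ (by \eqref{Theta1:eq} and homogeneity), and when $\max(|x|,|z|)\lesssim\a$ the terms $\a^3\T(x,z),\a^3\T(y,z)$ are $O(\a^{3+\g})$ and negligible against $\a^2$; comparing numerators and denominators in $B_\a$ then yields the claim. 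Since $t\mapsto T_\a(t;h)$ is decreasing in $|t|$,
\[
(\BT_\a(h_R)\psi_\a)(x)\ \ge\ T_\a(\a;h_R)\!\!\int_{|x-y|\le\a}\!\!\psi_\a(y)\,dy\ \ge\ 2c_0\,\a\,T_\a(\a;h_R)\,\psi_\a(x).
\]
Plugging this into the previous display and using $T_\a(\a;h_R)=\dfrac{h_R}{2\pi\a(2\a^2+h_R)}=\dfrac{cR^\g}{2\pi(2+c\a R^\g)}$, so that $T_\a(\a;h_R)^{-1}=2\pi\a+4\pi(cR^\g)^{-1}$, I obtain
\[
\|\psi_\a\chi_R^\perp\|^2\ \le\ \frac{\l_1}{c_0\,T_\a(\a;h_R)}\ =\ \frac{2\pi\l_1}{c_0}\,\a+\frac{4\pi\l_1}{c_0 c}\,\frac1{R^\g},
\]
which is of the required form and completes the proof.

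\textbf{Where the difficulty lies.} Everything except the regularity bound is bookkeeping of the two elementary mechanisms: the $(x-y)^2$ term in the denominator (already used in Fourier form in \eqref{fourier_below:eq}) and the growth of $\a^3\T$ for large $|x|$. The regularity bound is what is genuinely needed: without it, the naive Schur-type estimate of $\chi_R^\perp\BB_\a$ only yields $\|\BS_\a(h_R)\|=(1+c\a R^\g)^{-1/2}$, which is close to $1$ — and hence useless — precisely in the range $\a R^\g\lesssim1$ that matters when one sends $\a\to0$ and $R\to\infty$ jointly. (A cruder numerical constant than $4\l_1$ in front of $\a$ comes out of the argument above; it can be improved back to $4\l_1$ by replacing the pointwise Harnack constant with a quadratic-form localization, but any bound $1-C\a-C/R^\g$ already suffices for the subsequent compactness argument.)
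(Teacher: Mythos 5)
Your proof is correct, but it takes a genuinely different route from the paper's. The paper never needs any pointwise information about $\psi_\a$: it splits the quadratic form as $\mu_\a\le(\BS_\a(0)\psi_\a,\psi_\a\chi_R)+(\BS_\a(h)\psi_\a,\psi_\a(1-\chi_R))$ with $h=c\a^3R^\g$, rewrites this as $(\BT_\a(h)\psi_\a,\psi_\a\chi_R)+(\BS_\a(h)\psi_\a,\psi_\a)$, and then uses the exact Fourier-computed norms \eqref{st_norm:eq}. The point you flag as the obstruction --- that $\|\BS_\a(h)\|=(1+c\a R^\g)^{-1/2}$ is close to $1$ --- is neutralized there because the complementary piece $\|\BT_\a(h)\|=1-(1+c\a R^\g)^{-1/2}$ multiplies precisely the unknown $\|\psi_\a\chi_R\|$, so after inserting $\mu_\a\ge1-2\a\l_1$ one can divide by $\|\BT_\a(h)\|$ and the smallness cancels; this gives the clean constant $4\l_1$ with no regularity input. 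Your argument instead bounds the defect form $((\BS_\a(0)-\BB_\a)\psi_\a,\psi_\a)\le1-\mu_\a\le2\a\l_1$ from below on $\{|x|>R\}$ by $c\,\a\,T_\a(\a;h_R)\|\psi_\a(1-\chi_R)\|^2$, which requires the Harnack-type inequality $\psi_\a(y)\ge c_0\psi_\a(x)$ for $|x-y|\le\a$. That inequality is correct and your derivation of it from $\psi_\a=\mu_\a^{-1}\BB_\a\psi_\a$ and the kernel comparison $B_\a(y,z)\ge c_0B_\a(x,z)$ (with the two cases $|x|+|z|\gtrsim\a$ and $|x|+|z|\lesssim\a$, the latter using $\a^{3+\g}\le\a^2$) is sound; it is a standard and reusable device for positive eigenfunctions of positivity-improving kernels. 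The trade-off is that your constants are cruder ($2\pi\l_1/c_0$ in place of $4\l_1$), which, as you correctly observe, is harmless for Lemma \ref{strongly:lem}, where the $O(\a)$ term is absorbed into $C\rho^{-\g}$ anyway. In short: same two mechanisms (the growth $\T(x,y)\ge c|x|^\g$ and the a priori bound \eqref{upper:eq}), but the paper's bookkeeping is purely operator-norm based and shorter, while yours imports an extra pointwise lemma that makes the mechanism more transparent.
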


\begin{proof}
It follows from \eqref{Theta1:eq} that $\T(x, y)\ge c|x|^\g$, so that the kernel
 $B_\a(x, y)$ satisfies the bound
\begin{equation*}
B_\a(x, y)\le S_\a(x - y; c\a^3 R^{\g}), \ \ \textup{for}\ \ \ |x|\ge R > 0.
\end{equation*}
Since $\psi_\a\ge 0$,
\begin{align*}
\mu_\a  = (\BB_{\a}\psi_\a, \psi_\a)
\le &\
(\BS_\a(0) \psi_\a, \psi_\a\chi_R)
+
\bigl(\BS_\a(c\a^3 R^{\g}) \psi_\a, \psi_\a(1-\chi_R)\bigr)\notag\\[0.2cm]
= &\ \bigl(\BT_\a(c\a^3 R^{\g})\psi_\a, \psi_\a\chi_R\bigr)
+ \bigl(\BS_\a(c\a^3 R^{\g}) \psi_\a, \psi_\a\bigr).
\end{align*}
In view of \eqref{st_norm:eq},
\begin{align*}
\mu_\a\le &\ \|\BT_\a(c\a^3 R^{\g})\|\ \|\psi_\a\chi_R\| + \|\BS_\a(c\a^3 R^{\g})\| \\[0.2cm]
= &\ \biggl(1-\frac{1}{\sqrt{1+c\a R^{\g}}}\biggr)\|\psi_\a\chi_R\|
+ \frac{1}{\sqrt{1+c\a R^{\g}}}.
\end{align*}
Using, as in the proof of the previous lemma,
the bound \eqref{upper:eq}, we obtain that
\begin{equation*}
 1 - \frac{1}{\sqrt{1+c\a R^{\g}}} - 2\a\l_1
 \le \biggl(1-\frac{1}{\sqrt{1+c\a R^{\g}}}\biggr)\|\psi_\a\chi_R\|,
 \end{equation*}
 so
 \begin{equation*}
 1 - \frac{4\l_1(1+c\a R^{\g})}{cR^{\g}}
 \le
\|\psi_\a \chi_R\|.
\end{equation*}
  This entails \eqref{below:eq}.
\end{proof}

Now we show that any sequence from
the family $\psi_\a$ contains a norm-convergent subsequence. The proof is inspired
by \cite{Widom}, Lemma 7.
We precede it with the following elementary result.

\begin{lem}\label{weak:lem}
Let $f_j\in\plainL2(\R)$ be a sequence such that $\|f_j\|\le C$ uniformly in
$j = 1,2, \dots$, and  $f_j(x) = 0$ for all $|x|\ge \rho >0$
and all $j = 1, 2, \dots$. Suppose that $f_j$
converges weakly to $f\in\plainL2(\R)$ as $j\to\infty$, and that for some constant $A>0$,
and all $R\ge R_0>0$,
\begin{equation}\label{local:eq}
\| \hat f_j\chi_R\|
\ge A - C R^{-\vark}, \ \vark >0,
\end{equation}
uniformly in $j$. Then $\|f\|\ge A$.
\end{lem}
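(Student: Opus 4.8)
The plan is to exploit the hypothesis \eqref{local:eq} together with weak convergence to show that no "mass escapes to high frequencies" in the limit, and then combine this with the uniform spatial support condition to upgrade the trivial bound $\|f\|\le\liminf\|f_j\|$ into the reverse inequality $\|f\|\ge A$. The key point is that on the compact frequency interval $(-R,R)$, the truncated Fourier transforms $\hat f_j\chi_R$ converge \emph{strongly} (not merely weakly) to $\hat f\chi_R$ in $\plainL2(\R)$, because of the uniform compact spatial support of the $f_j$; this is a standard Rellich-type argument.

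In more detail, first I would observe that the functions $g_j := f_j$ form a bounded sequence in $\plainL2(\R)$ all supported in $[-\rho,\rho]$, hence $\widehat{g_j}$ is a bounded sequence of entire functions of exponential type $\rho$ whose restrictions to any bounded interval are equicontinuous and equibounded (differentiate under the integral sign: $|\widehat{g_j}(\xi)|\le (2\pi)^{-1/2}\sqrt{2\rho}\,\|g_j\|$ and similarly for $\widehat{g_j}{}'$). By Arzel\`a--Ascoli, passing to a subsequence, $\widehat{g_j}\to \widehat f$ uniformly on compact sets, where the limit is identified using the assumed weak convergence $f_j\rightharpoonup f$ (test against $e^{-i\xi\,\cdot}\,\chi_{[-\rho,\rho]}$ for each fixed $\xi$). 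Uniform convergence on $(-R,R)$ gives $\|\widehat{g_j}\chi_R\|\to\|\widehat f\chi_R\|$ for every fixed $R$. Hence from \eqref{local:eq},
\begin{equation*}
\|\widehat f\chi_R\| = \lim_{j\to\infty}\|\widehat{g_j}\chi_R\|\ge A - CR^{-\vark},\ \ R\ge R_0.
\end{equation*}
Now let $R\to\infty$: by monotone convergence $\|\widehat f\chi_R\|\uparrow\|\widehat f\|=\|f\|$, and the right-hand side tends to $A$, so $\|f\|\ge A$. (Since the inequality $\|f\|\ge A$ is obtained for the whole sequence once it holds along the extracted subsequence — indeed $A-CR^{-\vark}\le\liminf_j\|\widehat{g_j}\chi_R\|\le\|\widehat f\chi_R\|$ by lower semicontinuity of the norm under weak convergence, even without Arzel\`a--Ascoli — the subsequence extraction is not strictly needed; I would present the semicontinuity version to keep it clean.)

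The cleanest route is therefore to skip Arzel\`a--Ascoli entirely: for each fixed $R$ the map $h\mapsto \widehat h\,\chi_R$ is weakly continuous from $\plainL2$ to $\plainL2$ (it is bounded linear), so $\widehat{f_j}\chi_R\rightharpoonup\widehat f\chi_R$, whence $\|\widehat f\chi_R\|\le\liminf_j\|\widehat{f_j}\chi_R\|$; wait — that is the wrong direction. The point that genuinely requires the compact support is that we need $\|\widehat f\chi_R\|\ge\limsup_j\|\widehat{f_j}\chi_R\|$, i.e. we need \emph{strong} convergence on the bounded interval, and that is exactly where the exponential-type/equicontinuity (Rellich) input enters. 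So I would keep the Arzel\`a--Ascoli step. I do not expect any serious obstacle; the only point requiring care is making the identification of the uniform limit of $\widehat{f_j}$ with $\widehat f$ rigorous via the weak convergence hypothesis, and being careful that the final conclusion $\|f\|\ge A$ is for the original $f$ (which it is, since $f$ is the prescribed weak limit and the extracted subsequence still converges weakly to it).
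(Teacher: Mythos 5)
Your proof is correct and follows essentially the same route as the paper: the uniform compact support together with weak convergence forces $\hat f_j\chi_R\to\hat f\chi_R$ strongly in $\plainL2(\R)$ for each fixed $R$, after which one passes to the limit in \eqref{local:eq} and sends $R\to\infty$. The paper obtains the strong convergence from pointwise convergence of $\hat f_j(\xi)$ (weak convergence tested against $e^{-i\xi\,\cdot\,}\chi_{[-\rho,\rho]}$) plus uniform boundedness and dominated convergence, rather than your Arzel\`a--Ascoli equicontinuity argument, and you were right to retract the ``semicontinuity'' shortcut, since weak lower semicontinuity of the norm gives the inequality in the useless direction.
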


\begin{proof}
Since $f_j$ are uniformly compactly supported, the Fourier transforms $\hat f_j(\xi)$
converge to $\hat f(\xi)$ a.a. $\xi\in\R^d$ as $j\to\infty$.
Moreover, the sequence $\hat f_j(\xi)$
is uniformly bounded, so $\hat f_j\chi_R\to \hat f \chi_R$, $j\to\infty$
in $\plainL2(\R)$ for any $R>0$.
Therefore \eqref{local:eq} implies that
\[
\| \hat f \chi_R\|\ge A - CR^{-\vark}.
\]
Since $R$ is arbitrary, we have $\| f\| = \|\hat f\|\ge A$, as claimed.
\end{proof}

\begin{lem}\label{strongly:lem}
For any sequence $\a_n\to 0, n\to\infty$, there exists a subsequence
$\a_{n_k}\to 0, k\to\infty$, such that the eigenfunctions $\psi_{\a_{n_k}}$
converge in norm as $k\to\infty$.
\end{lem}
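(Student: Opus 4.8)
The plan is to extract a norm-convergent subsequence by combining a weak-compactness argument with the tail estimates already established. First I would use the uniform $\plainL2$-bound $\|\psi_{\a_n}\| = 1$ to pass to a subsequence (still denoted $\psi_{\a_n}$) converging weakly in $\plainL2(\R)$ to some $\psi$. The main point will be to upgrade this to norm convergence, and the obstacle is the usual one: weak convergence loses mass both ``at spatial infinity'' and ``at high frequencies'', and I must show neither happens in the limit.

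The spatial tail is controlled by \eqref{below:eq}: fixing $\e>0$, choosing $R$ so large that $CR^{-\g}<\e$ and then taking $\a$ small gives $\|\psi_\a\chi_R\|\ge 1-\e$, i.e. $\|\psi_\a(1-\chi_R)\|^2\le 2\e-\e^2$, uniformly for small $\a$. Thus the mass of $\psi_{\a_n}$ outside a fixed large interval is uniformly small. To handle one interval at a time cleanly, I would argue as in \cite{Widom}: for fixed $R$, the truncated functions $\psi_{\a_n}\chi_R$ are supported in $(-R,R)$, uniformly bounded in $\plainL2$, and converge weakly to $\psi\chi_R$. The frequency tail of these truncated functions is controlled by \eqref{fourier_below:eq}: for $\a R'\le 1$ one has $\|\hat\psi_\a\chi_{R'}\|^2\ge 1-4\l_1/R'$, and since $\psi_\a\chi_R$ differs from $\psi_\a$ by something of norm $\le \sqrt{2\e}$, the same lower bound (with $A=1$, up to an $O(\sqrt\e)$ error, and $\vark=1/2$ after taking square roots) holds for $\|\widehat{\psi_\a\chi_R}\chi_{R'}\|$ uniformly in small $\a$. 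Lemma \ref{weak:lem} then applies to the sequence $f_n=\psi_{\a_n}\chi_R$ and yields $\|\psi\chi_R\|\ge 1-O(\sqrt\e)$, hence $\|\psi\|\ge 1-O(\sqrt\e)$; letting $\e\to0$ gives $\|\psi\|\ge1$. Combined with $\|\psi\|\le\liminf\|\psi_{\a_n}\|=1$ from weak lower semicontinuity, we get $\|\psi\|=1$.

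Finally, weak convergence together with convergence of norms, $\|\psi_{\a_n}\|\to\|\psi\|$, gives norm convergence $\psi_{\a_n}\to\psi$ in $\plainL2(\R)$ (by the Hilbert-space identity $\|\psi_{\a_n}-\psi\|^2=\|\psi_{\a_n}\|^2-2\re(\psi_{\a_n},\psi)+\|\psi\|^2\to 0$). This proves the claim, with the passage to the weakly convergent subsequence being the only place a subsequence is extracted. I expect the delicate bookkeeping step to be showing that the frequency-localization bound \eqref{fourier_below:eq} survives truncation in $x$ uniformly in $\a$ with the right error order, so that Lemma \ref{weak:lem} is genuinely applicable; everything else is soft functional analysis.
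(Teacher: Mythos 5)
Your proposal is correct and follows essentially the same route as the paper: extract a weakly convergent subsequence, truncate in $x$ using \eqref{below:eq} to control the spatial tail, transfer the frequency-localization bound \eqref{fourier_below:eq} to the truncated functions by the triangle inequality, apply Lemma \ref{weak:lem} to conclude $\|\psi\|=1$, and upgrade weak convergence to norm convergence via convergence of norms. The only cosmetic difference is the bookkeeping ($\e$ versus the paper's cutoff parameter $\rho$), which does not change the argument.
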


\begin{proof}
Since the functions $\psi_\a, \a\ge 0$ are  normalized,
there is a subsequence $\psi_{\a_{n_k}}$ which converges weakly. Denote the limit
by $\psi$. From now on we write $\psi_k$ instead of $\psi_{\a_{n_k}}$
to avoid cumbersome notation. In view of the relations
\begin{equation*}
\| \psi_k-\psi\|^2 = 1 + \|\psi\|^2 - 2\re (\psi_k, \psi)\to
1 - \|\psi\|^2, k\to\infty,
\end{equation*}
it suffices to show that $\|\psi\| = 1$.

Fix a number $\rho>0$, and split $\psi_k$ in the following way:
\[
\psi_k(x) = \psi^{(1)}_{k, \rho}(x) + \psi^{(2)}_{k, \rho}(x),\
\psi^{(1)}_{k, \rho}(x) = \psi_k(x) \chi_\rho(x).
\]
Clearly, $\psi^{(1)}_{k, \rho}$ converges weakly to
$\psi_\rho = \psi\chi_{\rho}$ as $k\to\infty$.
Assume that $\a_{n_k}\le \rho^{-\g}$, so that by \eqref{below:eq},
\[
\|\psi^{(1)}_{k, \rho}\|^2 \ge 1 - \frac{C}{\rho^{\g}}, \ \
\|\psi^{(2)}_{k, \rho}\|^2 \le \frac{C}{\rho^{\g}}.
\]
Therefore, for any $R>0$,
\[
\|\widehat{\psi^{(1)}_{k, \rho}}\chi_R\|
\ge \|\hat \psi_k \chi_R\| - \|\psi^{(2)}_{k, \rho}\|
\ge 1- 4\l_1 R^{-1} - C\rho^{-\frac{\g}{2}},
\]
where we have used \eqref{fourier_below:eq}. By Lemma \ref{weak:lem},
\[
\|\psi_\rho\|\ge 1 - C\rho^{-\frac{\g}{2}}.
\]
Since $\rho$ is arbitrary, $\|\psi\|\ge 1$, and hence $\|\psi\|=1$. As a consequence,
the sequence $\psi_k$ converges in norm, as claimed.
\end{proof}

\section{Asymptotics of $\mu_\a, \a\to 0$: proof of Theorem \ref{main:thm}}

As before, by $\l_l$, $l = 1, 2, \dots$ we denote
the eigenvalues of $\BA$ arranged in ascending order,
and by $\phi_l$ -- a set of corresponding normalized eigenfunctions.
Recall that
the lowest eigenvalue $\l_1$ of the model operator $\BA$ is non-degenerate and
its (normalized) eigenfunction $\phi_1$ is chosen to be positive a.a. $x\in\R$.
We begin with proving Theorem \ref{main_b:thm}.

\begin{proof}[Proof of Theorem \ref{main_b:thm}]
The proof essentially follows the plan of \cite{Widom}.
It suffices to show that for
any sequence $\a_n\to 0, n\to \infty, $ one can find a subsequence
$\a_{n_k}\to 0$, $k\to \infty$ such that
\[
\lim_{k\to \infty} \a_{n_k}^{-1}(1-\mu_{\a_{n_k}}) = \l_1,
\]
and $\psi_{\a_{n_k}}$ converges in norm to $\phi_1$ as $k\to\infty$.
By Lemma \ref{strongly:lem} one can
pick a subsequence $\a_{n_k}$
such that  $\psi_{\a_{n_k}}$ converges in norm as $k\to \infty$.
As in the proof of Lemma \ref{strongly:lem} denote by $\psi$ the limit,
so $\|\psi\| = 1$ and $\psi\ge 0$ a.e..
For simplicity we write $\psi_\a$ instead of $\psi_{\a_{n_k}}$.
For an arbitrary function $f\in D(\BA)$ write
\begin{align*}
\mu_{\a}(\psi_\a, f)
= &\ (\BB_\a \psi_{\a}, f)
= (\psi_{\a}, \BB^{(l)}_\a f ) + (\psi_{\a}, (\BB_\a- \BB^{(l)}_\a) f )\\[0.2cm]
= &\ (\psi_{\a}, f)  - \a (\psi_{\a}, \BA f ) + (\psi_{\a}, \BR_\a f)
+ (\psi_{\a}, (\BB_\a- \BB^{(l)}_\a) f ).
\end{align*}
This implies that
\begin{equation}\label{mua:eq}
\a^{-1}(1-\mu_{\a})(\psi_{\a}, f)
= (\psi_{\a}, \BA f ) -\a^{-1} (\psi_{\a}, \BR_\a f)
- \a^{-1} (\psi_{\a}, (\BB_\a- \BB^{(l)}_\a) f ).
\end{equation}
In view of \eqref{k-l:eq} the last term on the right-hand side
tends to zero as $\a\to 0$. The
first term trivially tends to $(\psi, \BA f)$.
Consider the second term:
\begin{align*}
|(\psi_{\a}, \BR_\a f)|
= &\ (\psi_\a, \BR^{(1)}_\a f)
+ (g_\a^\vark \psi_\a, \BE^{(2)}_\a\lu D_x\ru \z\bigl((\a\lu D_x\ru)^\vark\bigr)f)\\[0.2cm]
\le &\ \|\BR^{(1)}_\a f\| + \|g_\a^\vark \psi_\a\|
\ \|\BE^{(2)}_\a\lu D_x\ru \z\bigl((\a\lu D_x\ru)^\vark\bigr)f\|.
\end{align*}
Assume now that $\vark <1$.
By Corollary \ref{g:cor} and  Lemma \ref{psi:lem}, the right-hand side
is of order $o(\a)$, and hence, if
$(\psi, f)\not = 0$, then passing to the limit in \eqref{mua:eq} we get
\[
\lim_{\a\to 0} \a^{-1}(1-\mu_{\a})   = \frac{(\psi, \BA f)}{(\psi, f)}.
\]
Let $f = \phi_l$ with some $l$, so that
$(\psi, \BA f) = \l_l(\psi, \phi_l)$.
Suppose that $(\psi, \phi_l)\not = 0$, so that
\[
\lim_{\a\to 0} \a^{-1}(1-\mu_{\a}) = \l_l.
\]
By the
uniqueness of the above limit,
$(\psi, \phi_j) = 0$ for all $j$'s such that $\l_j\not=\l_k$.
Thus, by completeness of the system $\{\phi_k\}$,
the function $\psi$  is an eigenfunction
of $\BA$ with the eigenvalue $\l_l$.
In view of \eqref{upper:eq}, $\l_l\le \l_1$. Since the eigenvalues $\l_j$
are labeled in ascending order we conclude that $\l_l = \l_1$.
As this eigenvalue is non-degenerate and the corresponding eigenfunction
$\phi_1$ is positive a.e., we observe that $\psi = \phi_1$.
\end{proof}

\begin{proof}[Proof of Theorem \ref{main:thm}]
Theorem \ref{main:thm} follows from Theorem \ref{main_b:thm}
due to the relations \eqref{unitary:eq}.
\end{proof}

\section{Miscellaneous}

In this short section we collect some open questions related to the spectrum of the operator
\eqref{bkb:eq}.

\subsection{}
Theorems \ref{main:thm} and \ref{main_b:thm}  give information on the largest eigenvalue
$\SM_\beta$ of the operator $\BK_\beta$ defined in \eqref{bkb:eq}, \eqref{kernel:eq}. Let
\begin{equation}\label{mbj:eq}
\SM_\beta \equiv \SM_{1, \b} \geq \SM_{2, \b} \geq \ldots
\end{equation}
be the sequence of all positive eigenvalues of $\BK_\b$ arranged in
descending order.
The following conjecture is a natural extension of Theorem \ref{main:thm}.

\begin{con}\label{extremal:conj}
For any $j = 1, 2, ...$
\begin{equation}\label{mjb_ass:eq}
\lim_{\b \to 0} \b^{- \frac{2}{\g + 1}}
(1 - \SM_{j, \b}) = \l_j,
\end{equation}
where $\l_1 < \l_2 \leq \ldots $
are eigenvalues of the operator $\BA$ defined in \eqref{modela:eq},
arranged in ascending order.
\end{con}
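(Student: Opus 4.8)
The plan is to run the argument of Theorem~\ref{main_b:thm} with the single top eigenpair replaced by the top $j$ of them, recast as a statement about the counting function of the spectrum of $\BB_\a$. Throughout let $\mu_{j,\a}$ be the $j$-th largest eigenvalue of $\BB_\a$, so that $\mu_{j,\a}=\SM_{j,\b}$ with $\a=\b^{\frac2{\g+1}}$ by the unitary equivalence \eqref{alpha:eq}, let $\psi_{j,\a}$ be a corresponding normalized real eigenfunction, and put $N(t)=\#\{l:\l_l<t\}$. The upper bound $\limsup_{\a\to0}\a^{-1}(1-\mu_{j,\a})\le\l_j$ comes from a min-max argument: with $L=\operatorname{span}\{\phi_1,\dots,\phi_j\}$ as trial subspace and $\BB_\a=I-\a\BA+\BR_\a+(\BB_\a-\BB^{(l)}_\a)$, the bounds \eqref{k-l:eq} and Corollary~\ref{g:cor} (together with $\sup_{\a\le1}\|g_\a^\vark\phi_l\|<\infty$) give, uniformly over the finite-dimensional space $L$, that $(\BB_\a f,f)\ge(1-\a\l_j)\|f\|^2-o(\a)\|f\|^2$, whence $\mu_{j,\a}\ge1-\a\l_j-o(\a)$. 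Equivalently, $\#\{l:\a^{-1}(1-\mu_{l,\a})<t\}\ge N(t)$ for every $t$ and all small $\a$.

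For the lower bound I would first transfer the a priori estimates of Section~4 to the higher eigenfunctions. The bound $\|g_\a^\vark\psi_{j,\a}\|\le C$ of Lemma~\ref{psi:lem} carries over unchanged, its proof using only the pointwise kernel bound and the lower bound on $\mu_{j,\a}$ just obtained. The spatial tail estimate \eqref{below:eq} carries over once $\psi_\a$ is replaced by $|\psi_{j,\a}|$: since $B_\a\ge0$ one has $\mu_{j,\a}=|(\BB_\a\psi_{j,\a},\psi_{j,\a})|\le(\BB_\a|\psi_{j,\a}|,|\psi_{j,\a}|)$, while $\bigl\||\psi_{j,\a}|\,\chi_R\bigr\|=\|\psi_{j,\a}\chi_R\|$, so the chain of inequalities in that proof is unaffected. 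Granting, in addition, that $\{\psi_{j,\a}\}_{\a\le1}$ is norm-precompact (discussed below), the rest is the argument of the proof of Theorem~\ref{main_b:thm}: for any $\a_n\to0$ pass to a subsequence along which $\psi_{1,\a_n},\dots,\psi_{j,\a_n}$ converge in norm, say to an orthonormal family $\psi_1,\dots,\psi_j$ (norm convergence passes to inner products), and $\a_n^{-1}(1-\mu_{l,\a_n})\to\nu_l\in[0,\l_l]$; testing the identity \eqref{mua:eq}, valid for each eigenpair $(\mu_{l,\a},\psi_{l,\a})$, against $f\in D(\BA)$ and invoking Corollary~\ref{g:cor} with $\|g_\a^\vark\psi_{l,\a}\|\le C$ identifies each $\psi_l$ as an eigenfunction of $\BA$ with eigenvalue $\nu_l$.

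The conjecture then follows from a counting/contradiction step. Fix $t\notin\sigma(\BA)$. Were $\mu_{N(t)+1,\a_n}>1-\a_n t$ along some $\a_n\to0$, the construction above would produce $N(t)+1$ orthonormal eigenfunctions of $\BA$ with eigenvalues $<t$, contradicting $\dim\operatorname{ran}\mathbf 1_{[0,t)}(\BA)=N(t)$; hence $\#\{l:\a^{-1}(1-\mu_{l,\a})<t\}\le N(t)$ for all small $\a$, and with the min-max inequality this forces equality for every $t\notin\sigma(\BA)$. Squeezing $t$ up to and past each value $\l_j$ (which simultaneously handles the multiplicities of $\BA$ and the ordering in \eqref{mjb_ass:eq}) yields $\a^{-1}(1-\mu_{j,\a})\to\l_j$, i.e. \eqref{mjb_ass:eq}.

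The one ingredient that does not transfer from the case $j=1$, and which I expect to be the main obstacle, is the \emph{frequency localization} of the eigenfunctions — the analogue of \eqref{fourier_below:eq} — which together with the spatial tail is exactly what yields the norm-precompactness of $\{\psi_{j,\a}\}$ needed above (by the argument of Lemma~\ref{strongly:lem}). The proof of \eqref{fourier_below:eq} used $B_\a(x,y)<S_\a(x-y;0)$ \emph{and} $\psi_\a\ge0$ to get $\mu_\a<\int e^{-\a|\xi|}|\hat\psi_\a(\xi)|^2\,d\xi$, hence $\|\hat\psi_\a\chi_R\|^2\ge1-O(R^{-1})$; for a sign-changing $\psi_{j,\a}$ this route controls only $\widehat{|\psi_{j,\a}|}$, which localizes $|\psi_{j,\a}|$ but not $\psi_{j,\a}$ itself, so the oscillation of the sign of $\psi_{j,\a}$ could in principle destroy the weak limit. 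The cleanest way to repair this is a uniform bound $\sup_{\a\le1}(\BA\psi_{j,\a},\psi_{j,\a})<\infty$: since $(\BA u,u)\ge\bigl\||D_x|^{1/2}u\bigr\|^2$ and $(\BA u,u)\ge\tfrac12\int\t|u|^2\ge c\int|x|^\g|u|^2$, such a bound delivers both the frequency localization $\|\hat\psi_{j,\a}\chi_R\|^2\ge1-C/R$ and a strong spatial tail in one stroke. I would attack it either by a sharper pseudodifferential analysis of $\BB^{(l)}_\a=\op(b^{(l)}_\a)$ — bounding the error term in $1-\mu_{j,\a}=\a(\BA\psi_{j,\a},\psi_{j,\a})-(\BR_\a\psi_{j,\a},\psi_{j,\a})+O(\a^{1+1/\g})$ by a sublinear function of $(\BA\psi_{j,\a},\psi_{j,\a})$ so that it may be absorbed — or, more robustly, by an induction on $j$ that exploits the already-proved norm convergence of $\psi_{1,\a},\dots,\psi_{j-1,\a}$ and realizes $\psi_{j,\a}$ as the top eigenfunction of a small compact perturbation of $I-\a\BA$ on the norm-convergent subspace $\operatorname{span}\{\psi_{1,\a},\dots,\psi_{j-1,\a}\}^\perp$. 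Once frequency localization is in place uniformly in $\a$, the remainder is the routine adaptation of Sections~4 and 5 outlined above.
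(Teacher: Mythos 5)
This statement is stated in the paper as Conjecture~\ref{extremal:conj}; the paper offers no proof of it, so there is nothing to compare your attempt against except the proof of the $j=1$ case (Theorem~\ref{main_b:thm}), which is exactly what you try to extend. Your outline of the soft parts is sound: the min-max upper bound over $\operatorname{span}\{\phi_1,\dots,\phi_j\}$, the transfer of Lemma~\ref{psi:lem} and of the spatial tail bound \eqref{below:eq} via $|\psi_{j,\a}|$, the identification of norm-limits as eigenfunctions of $\BA$ through \eqref{mua:eq}, and the final counting argument are all routine adaptations. But the proposal does not constitute a proof, because the one step you flag as "the main obstacle" is in fact the entire content of the conjecture, and neither of your proposed repairs is carried out or obviously workable.

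Concretely: every compactness statement in Section~4 of the paper rests on the positivity of $\psi_\a$, which is available only for the top eigenfunction (via the positivity-improving property of $\BK_\b$). For $j\ge 2$ the eigenfunctions change sign, and the inequality $\mu_{j,\a}\le(\BS_\a(0)|\psi_{j,\a}|,|\psi_{j,\a}|)$ localizes only $\widehat{|\psi_{j,\a}|}$, which says nothing about where $\hat\psi_{j,\a}$ lives; a rapidly oscillating $\psi_{j,\a}$ with weak limit $0$ is not excluded by anything you establish, and then the whole limit-identification and counting scheme collapses. Your first repair — absorbing the error in $1-\mu_{j,\a}=\a(\BA\psi_{j,\a},\psi_{j,\a})+\dots$ — runs into the problem that $\|\BR^{(1)}_\a\psi\|$ is controlled by $\bigl(\int\lu x\ru^{2\g}|\psi|^2\bigr)^{1/2}$, whereas the quadratic form of $\BA$ only controls $\int|x|^{\g}|\psi|^2$, so the error is not visibly sublinear in $(\BA\psi_{j,\a},\psi_{j,\a})$. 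Your second repair (induction, viewing $\psi_{j,\a}$ as the top eigenfunction on $\operatorname{span}\{\psi_{1,\a},\dots,\psi_{j-1,\a}\}^\perp$) does not restore positivity improvement on that $\a$-dependent subspace, so it gives no new access to frequency localization. In short, you have correctly reduced the conjecture to a uniform $\plainH{1/2}$-type bound on the higher eigenfunctions, but that reduction is where the open problem sits, not a routine lemma; the statement remains unproved.
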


For the case $\T(x, y) = (x^2 + y^2)^2$ the formula \eqref{mjb_ass:eq}
was conjectured in \cite{M}, Section 7.1, but
without specifying what the values $\l_j$ are. As in \cite{M}, the formula
\eqref{mjb_ass:eq} is prompted by the paper \cite{Widom} where asymptotics of
the form \eqref{mjb_ass:eq} were found for an integral operator with a difference kernel.

\subsection{}
Although the operator $\BK_\b$ converges strongly to
the positive-definite operator $\BK_0$ as $\b\to 0$, we can't say whether or
not $\BK_\b, \b >0,$  has negative eigenvalues.

\subsection{}
Suppose that the function $\T(x, y)$ in \eqref{kernel:eq} is even, i.e.
$\T(-x, -y) = \T(x, y)$, $x, y\in\R$. Then the
subspaces $H^{\textup{\tiny e}}$
and $H^{\textup{\tiny o}}$ in $\plainL2(\R)$ of even and odd
functions are invariant for $\BK = \BK_\beta$.
Consider restriction operators
$\BK^{\textup{\tiny e}} = \BK\upharpoonright H^{\textup{\tiny e}}$
and $\BK^{\textup{\tiny o}} = \BK\upharpoonright H^{\textup{\tiny o}}$
and their positive eigenvalues $\l^{\textup{\tiny e}}_j$ and $\l^{\textup{\tiny o}}_j$,
$j = 1, 2, \dots$, arranged in descending order.
Remembering that the top eigenvalue of $\BK$ is non-degenerate and its eigenfunction
is positive a.e., one easily concludes that
$\l^{\textup{\tiny e}}_1 > \l^{\textup{\tiny o}}_1$.
Are there  similar inequalities for the pairs
$\l^{\textup{\tiny e}}_j, \l^{\textup{\tiny o}}_j$ with $j > 1$?

\section{Appendix. Boundedness of integral and pseudo-differential operators}

In this Appendix, for the reader's convenience we remind (without proofs) simple tests
of boundedness for integral and pseudo-differential operators acting on $\plainL2(\R^d)$,
$d\ge 1$.  Consider the integral operator
\begin{equation}\label{integral:eq}
(K u)(\bx) =  \int_{\R^d}  K(\bx, \by) u(\by) d\by,
\end{equation}
with the kernel $K(\bx, \by)$, and the pseudo-differential operator
\begin{equation}\label{pdo:eq}
(\op(a) u)(\bx) = \frac{1}{(2\pi)^d} \int_{\R^d}\int_{\R^d}
e^{i(\bx-\by)\cdot\bxi} a(\bx, \bxi) u(\by) d\by \bxi,
\end{equation}
with the symbol $a(\bx, \bxi)$.

The following classical result is
known as the Schur Test and it can be found, even in a more general
form, in \cite{HalS}, Theorem 5.2.

\begin{prop}\label{schur:prop}
Suppose that the kernel $K$ satisfies the conditions
\begin{equation*}
M_1 = \sup_{\bx}\int_{\R^d} |K(\bx, \by)| d\by < \infty,\ \
M_2 = \sup_{\by}\int_{\R^d} |K(\bx, \by)| d\bx < \infty.
\end{equation*}
Then the operator \eqref{integral:eq} is bounded on $\plainL2(\R^d)$
and $\|K\|\le \sqrt{M_1 M_2}$.
\end{prop}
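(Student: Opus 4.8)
The plan is to derive the bound $\|K\|\le\sqrt{M_1 M_2}$ by the standard Cauchy--Schwarz splitting of the kernel; this is a classical argument, so I expect no real obstacle beyond routine measure-theoretic bookkeeping. First I would fix $u\in\plainL2(\R^d)$ and estimate $(Ku)(\bx)$ pointwise. Writing $|K(\bx,\by)| = |K(\bx,\by)|^{1/2}\cdot|K(\bx,\by)|^{1/2}$ and applying the Cauchy--Schwarz inequality in the $\by$ variable, with one factor of $|K|^{1/2}$ assigned to each side, gives
\[
|(Ku)(\bx)|
\le
\Bigl(\int_{\R^d}|K(\bx,\by)|\,d\by\Bigr)^{1/2}
\Bigl(\int_{\R^d}|K(\bx,\by)|\,|u(\by)|^2\,d\by\Bigr)^{1/2}
\le
M_1^{1/2}\Bigl(\int_{\R^d}|K(\bx,\by)|\,|u(\by)|^2\,d\by\Bigr)^{1/2},
\]
where the last step uses the definition of $M_1$.

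Next I would square this inequality and integrate in $\bx$ over $\R^d$, obtaining
\[
\|Ku\|^2 \le M_1 \int_{\R^d}\int_{\R^d} |K(\bx,\by)|\,|u(\by)|^2\,d\by\,d\bx.
\]
Since the integrand is non-negative, Tonelli's theorem justifies interchanging the order of integration, after which the inner integral $\int_{\R^d}|K(\bx,\by)|\,d\bx$ is bounded by $M_2$ uniformly in $\by$. This yields $\|Ku\|^2\le M_1 M_2\,\|u\|^2$, which both shows that $Ku\in\plainL2(\R^d)$ and gives the asserted norm bound, so that $\|K\|\le\sqrt{M_1 M_2}$.

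The only point requiring a little care --- and it is minor --- is that the finiteness of $\|Ku\|$ is not known in advance, so the Cauchy--Schwarz step should be read as an inequality between non-negative, possibly infinite, quantities, made rigorous a posteriori once Tonelli's theorem shows the right-hand side is finite; measurability of $\bx\mapsto (Ku)(\bx)$ likewise follows from Tonelli applied to $|K(\bx,\by)|\,|u(\by)|$. Since the result is standard and is stated in greater generality in the cited reference, no genuine difficulty is involved, and in the paper it is quoted without proof.
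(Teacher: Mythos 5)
Your argument is the standard Schur-test proof (splitting $|K|=|K|^{1/2}\cdot|K|^{1/2}$, Cauchy--Schwarz in $\by$, then Tonelli), and it is correct, including the careful remark about measurability and a posteriori finiteness. The paper itself gives no proof of this proposition --- it is stated in the appendix explicitly ``without proofs'' with a citation to Halmos--Sunder --- so there is nothing to compare against; your write-up supplies exactly the classical argument the authors are invoking.
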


For pseudo-differential operators
on $\plainL2(\R^d)$ we use the test of boundedness
found by H.O.Cordes in \cite{Cordes}, Theorem $B_1'$.

\begin{prop}\label{boundedness:prop}
Let $a(\bx, \bxi), \bx, \bxi\in\R^d, d\ge 1$, be a function
such that its distributional derivatives of the form
$\nabla^n_{\bx}\nabla^m_{\bxi} a$
are $\plainL\infty$-functions for all $0\le n, m\le r$, where
\[
r = \left[\frac{d}{2}\right]+1.
\]
Then the operator \eqref{pdo:eq} is bounded on $\plainL2(\R^d)$ and
\begin{equation*}
\|\op( a)\|\le C \max_{0\le n, m\le r}
\| \nabla_{\bx}^{n} \nabla_{\bxi}^m a\|_{\plainL\infty},
\end{equation*}
with a constant $C$ depending only on $d$.
\end{prop}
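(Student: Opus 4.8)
The plan is to deduce the estimate from the Cotlar--Stein almost-orthogonality lemma, after decomposing the symbol into pieces supported on unit cubes of the phase space $\R^d_{\bx}\times\R^d_{\bxi}$. This is the classical route to the Calder\'on--Vaillancourt/Cordes bound; the orthogonality is soft, and essentially the whole difficulty lies in keeping the book-keeping sharp enough that $r=[d/2]+1$ derivatives in each variable are used (this is the Sobolev embedding threshold $\plainH{s}(Q)\hookrightarrow\plainL\infty(Q)$, $s>d/2$, over a unit cube $Q$) rather than the $\approx d+1$ one would get from a cruder argument.

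First I would reduce to a smooth, compactly supported symbol with controlled derivatives. Put $M:=\max_{0\le n,m\le r}\|\nabla_\bx^n\nabla_\bxi^m a\|_{\plainL\infty}$. Mollifying, $a_\vare=a*(\rho_\vare\otimes\rho_\vare)$, does not increase the norms $\|\nabla_\bx^n\nabla_\bxi^m a_\vare\|_{\plainL\infty}$ for $n,m\le r$; multiplying by cut-offs $\Phi(\bx/R)\,\Phi(\bxi/R)$ changes them only by a factor bounded uniformly in $R\ge1$; and $(\op(a_{\vare,R})u,v)\to(\op(a)u,v)$ for Schwartz $u,v$ (first $R\to\infty$, then $\vare\to0$), so a bound $\|\op(a_{\vare,R})\|\le CM$ with $C=C(d)$ passes to the limit. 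Thus I may assume $a\in\co(\R^{2d})$ with $\sup|\nabla_\bx^n\nabla_\bxi^m a|\le CM$ for $n,m\le r$. Now fix $\chi\in\co(\R^d)$ with $\supp\chi\subset\{|\bxi|<1\}$ and $\sum_{\bk\in\Z^d}\chi(\bxi-\bk)=1$, set $a_{\bj\bk}(\bx,\bxi)=a(\bx,\bxi)\chi(\bx-\bj)\chi(\bxi-\bk)$ and $T_{\bj\bk}=\op(a_{\bj\bk})$, so $\op(a)=\sum_{\bj,\bk}T_{\bj\bk}$ and $\sup|\nabla_\bx^n\nabla_\bxi^m a_{\bj\bk}|\le CM$ uniformly for $n,m\le r$. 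Writing out the kernels of $T_{\bj\bk}T_{\bj'\bk'}^*$ and $T_{\bj\bk}^*T_{\bj'\bk'}$ and integrating out the intermediate variable by Plancherel, one gets the two orthogonality relations: $T_{\bj\bk}T_{\bj'\bk'}^*=0$ unless $|\bk-\bk'|<2$, and $T_{\bj\bk}^*T_{\bj'\bk'}=0$ unless $|\bj-\bj'|<2$.

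For the surviving pairs the regularity enters through integration by parts: the kernel of $T_{\bj\bk}T_{\bj'\bk'}^*$ equals $(2\pi)^{-d}\int e^{i(\bx-\by)\bxi}a_{\bj\bk}(\bx,\bxi)\overline{a_{\bj'\bk'}(\by,\bxi)}\,d\bxi$, it is supported in unit balls around $\bj$ in $\bx$ and $\bj'$ in $\by$, and $r$ integrations by parts in $\bxi$ bound it by $CM^2\langle\bx-\by\rangle^{-r}$; dually one gains $\langle\bk-\bk'\rangle^{-r}$ for $T_{\bj\bk}^*T_{\bj'\bk'}$. The decisive step is to turn these kernel bounds into estimates on the operator \emph{norms} that are summable in the Cotlar--Stein sense, i.e.\ $\sup_{\bj,\bk}\sum_{\bj',\bk'}\|T_{\bj\bk}T_{\bj'\bk'}^*\|^{1/2}\le CM$ together with the analogous bound for $T_{\bj\bk}^*T_{\bj'\bk'}$. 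Here the crude Schur test is insufficient, because on $\R^d$ the weight $\langle\cdot\rangle^{-r}$ with $r$ only slightly above $d/2$ is in $\plainL2$ but not in $\plainL1$; instead one conjugates $T_{\bj\bk}=U\,\widetilde T_{\bj\bk}\,V$ by modulations and translations $U,V$ moving the cube to the origin, estimates the resulting almost-orthogonal products through Hilbert--Schmidt norms of the auxiliary factors (each uniformly $\le CM$, via $\plainL2$ control of the symbol over a unit cube), and invokes the embedding $\plainH{s}\hookrightarrow\plainL\infty$, $s>d/2$, to see that the $r=[d/2]+1$ available derivatives are exactly what is needed. With these two estimates in hand, the Cotlar--Stein lemma yields $\|\op(a)\|=\|\sum_{\bj,\bk}T_{\bj\bk}\|\le CM$, which is the claim.

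The main obstacle is precisely this last quantitative part. The reductions, the phase-space decomposition, the orthogonality, and the integrations by parts are all routine; what requires genuine care is never to spend an $\plainL1$-type estimate where only an $\plainL2$-type one is available --- keeping all off-diagonal bounds on the Hilbert--Schmidt side and using $\plainH{s}\hookrightarrow\plainL\infty$ only once, at the very end --- so that $[d/2]+1$ derivatives in each variable suffice. This sharp book-keeping is exactly the content of the argument of \cite{Cordes}.
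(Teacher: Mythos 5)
First, note that the paper does not prove Proposition \ref{boundedness:prop} at all: the Appendix explicitly states these results ``without proofs'' and refers to \cite{Cordes} (Theorem $B_1'$), \cite{Hw} and \cite{Lerner} for the argument. So there is no in-paper proof to match; your proposal has to stand on its own, and as written it does not.

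The gap is exactly at the step you yourself call ``the main obstacle'', and it is not a book-keeping issue --- the Cotlar--Stein framework you set up cannot deliver the sharp derivative count $r=[d/2]+1$. After the unit-cube decomposition, Cotlar--Stein requires $\sup_{\bj,\bk}\sum_{\bj',\bk'}\|T_{\bj\bk}^*T_{\bj'\bk'}\|^{1/2}\le CM$. The orthogonality relations reduce this to a sum over $\bk'\in\Z^d$ (with $|\bj-\bj'|\le 1$), so you need $\|T_{\bj\bk}^*T_{\bj'\bk'}\|\lesssim M^2\langle \bk-\bk'\rangle^{-N}$ with $N>2d$ for $\sum_{\bk'}\langle\bk-\bk'\rangle^{-N/2}$ to converge. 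But the only source of off-diagonal decay is integration by parts, which spends one symbol derivative per power of decay; with only $r=[d/2]+1$ derivatives available you get $N=r$, and $r/2\le (d+2)/4<d$ for every $d\ge1$, so the Cotlar--Stein sum diverges. Passing to Hilbert--Schmidt norms of the products does not help: the HS norm of a kernel supported in a product of unit balls and bounded by $M^2\langle\bj-\bj'\rangle^{-r}$ is again $O(M^2\langle\bj-\bj'\rangle^{-r})$, with the same insufficient exponent. This is precisely why the textbook Cotlar--Stein proof of Calder\'on--Vaillancourt needs on the order of $2d+1$ derivatives in each variable, and why Cordes' sharp version is proved by a structurally different argument: one factors the symbol through Bessel potentials, $a=G_s*_{\bx}G_s*_{\bxi}b$ with $b=\langle D_{\bx}\rangle^{s}\langle D_{\bxi}\rangle^{s}a\in\plainL\infty$ and $s>d/2$, and writes $\op(a)$ as an \emph{absolutely convergent} superposition (weighted by the $\plainL1\cap\plainL2$ Bessel kernels) of uniformly bounded elementary operators of the form ``multiplication times Fourier multiplier'', whose norms are controlled by Hilbert--Schmidt estimates; no almost-orthogonality is invoked. (Hwang's proof in \cite{Hw} is a third route, via direct Cauchy--Schwarz on the bilinear form.) Your final paragraph gestures at ingredients of Cordes' synthesis but grafts them onto the Cotlar--Stein scaffold, where they do not produce the missing summability; to make the proposal into a proof you would need to abandon the almost-orthogonality step and carry out the Bessel-potential factorization explicitly.
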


It is important for us that for $d=1$ the above test requires
the boundedness of derivatives $\p_x^n \p_\xi^m a$ with $n, m\in\{0, 1\}$
only. This result is extended to arbitrary dimensions by M. Ruzhansky and M. Sugimoto,
see \cite{RuzhSug} Corollary 2.4.
Recall that the classical Calder\'on-Vaillancourt theorem needs
more derivatives with respect to each variable, see \cite{Cordes}
and \cite{RuzhSug} for discussion. A short prove of  Proposition \ref{boundedness:prop}
was given by I.L. Hwang in \cite{Hw}, Theorem 2 (see also \cite{Lerner}, Lemma 2.3.2
for a somewhat simplified version).

\bibliographystyle{amsplain}
\bibliography{bibmaster}

\begin{thebibliography}{99}



\bibitem{Add} J. Adduci, \emph{Perturbations of self-adjoint operators with discrete spectrum,
Ph. D. Thesis}, the Ohio State University, Columbus, Ohio, 2011.


\bibitem{Cordes} H.O. Cordes, \emph{On compactness of commutators of multiplications and
convolutions, and boundedness of pseudodifferential operators}, J. Funct. Anal.
\textbf{18} (1975), 115--131.

\bibitem{Davies} E. B. Davies, \emph{Linear operators and their spectra (Cambridge studies
in advanced mathematics)}, Cambridge University Press, 2007.

\bibitem{HalS}
P.R. Halmos, V.Sh. Sunder, \emph{Bounded integral operators on $\plainL2$ spaces},
Ergebnisse der Mathematik und ihrer Grenzgebiete
(Results in Mathematics and Related Areas), vol. 96.,
Springer-Verlag, Berlin, 1978.

\bibitem{Hw} I.L. Hwang, \emph{The $L_2$-boundedness of pseudo-differential
operators}, Trans. AMS \textbf{302} (1987), pp. 55-76.


\bibitem{KC1} P. Krotkov, A. Chubukov, \emph{ Non-Fermi liquid and pairing in
electron-doped cuprates}, Physical Review Letters \textbf{96}, Issue 10 (March 17,
2006), pp. 107002 - 107005.

\bibitem{KC2}
P. Krotkov, A. Chubukov, \emph{Theory of non-Fermi liquid and pairing
in electron-doped cuprates}, Physical Review B \textbf{74}, Issue 1 (July 01, 2006), pp.
014509 - 014524.

\bibitem{Lerner}
N. Lerner, \emph{Some facts about the Wick calculus. Pseudo-differential operators},
135--174,
Lecture Notes in Math., 1949, Springer, Berlin, 2008.




\bibitem{M} B. Mityagin,
\emph{An anisotropic integral operator in high temperature superconductivity},
Israel J Math \textbf{181}, No. 1 (2011), 1--28.

\bibitem{RS1}M. Reed M. and B.
Simon, \emph{ Methods of Modern Mathematical Physics, I}, Academic Press,
New York, 1980.

\bibitem{RS2}M. Reed M. and B.
Simon, \emph{ Methods of Modern Mathematical Physics, II}, Academic Press,
New York, 1975.

\bibitem{RS4}M. Reed M. and B.
Simon, \emph{ Methods of Modern Mathematical Physics, IV}, Academic Press,
New York, 1978.

\bibitem{RuzhSug} M. Ruzhanky, M. Sugimoto,
\emph{Global $\plainL2$-boundedness theorems for a class of Fourier integral operators},
Comm. Part. Diff. Eq. \textbf{31} (2006), 547 --569.

\bibitem{Sch} M. A. Schubin, \emph{Pseudodifferential Operators and Spectral Theory},
Springer, 2001.

\bibitem{Widom} H. Widom,
\emph{Extreme eigenvalues of translation kernels},   Trans. Amer. Math. Soc.
\textbf{100} 1961, 252--262.


\end{thebibliography}


\end{document}